\numberwithin{equation}{section}
\newtheorem{theorem}{Theorem}[section]
\newtheorem{lemma}[theorem]{Lemma}
\newtheorem{corollary}[theorem]{Corollary}
\title[Ground state solutions for bi-harmonic
Kirchhoff-type problems]{Nehari-type ground state solutions for
asymptotically periodic bi-harmonic
Kirchhoff-type problems in $\mathbb{R}^N$}
\author[Souza Filho]{A. P. F. Souza Filho}
\address{Departamento de Ci\^encias Exatas e Naturais, Universidade Federal Rural do Semi-\'Arido, 59900-000, Pau dos Ferros--RN, Brazil}
\email{padua.filho@ufersa.edu.br}
\keywords{Nonlinear bi-harmonic Kirchhoff-type equations, Nehari-type ground state solution, asymptotically periodic, variational methods}
\subjclass[2020]{35G20, 35J10, 35J30, 35J35}
\thanks{}
 \thanks{}
\begin{document}

%
%

\begin{abstract}
We investigate the following Kirchhoff-type  biharmonic equation 
\begin{equation}\label{pr}
\left\{
\begin{array}{ll}
\Delta^2 u+ \left(a+b\int_{\mathbb{R}^N}|\nabla u|^2d x\right)(-\Delta u+V(x)u)=f(x,u),\quad x\in \mathbb{R}^N,\\
u\in H^{2}(\mathbb{R}^N),
\end{array}
\right.
\end{equation}
where $a>0$, $b\geq 0$ and $V(x)$ and $f(x, u)$ are periodic or asymptotically periodic in $x$. We study the existence of Nehari-type ground state solutions of the problem just above with $f(x,u)u-4F(x,u)$ sign-changing, where $F(x,u):=\int_0^uf(x,s)d s$. We significantly extend some results from the previous literature.
\end{abstract}

\maketitle




%
%

\section{INTRODUCTION AND STATEMENT OF MAIN RESULTS}
\noindent
This paper is concerned with the following  Kirchhoff type problem:
\begin{equation}\label{p}
\left\{
\begin{array}{ll}
\Delta^2 u+ \left(a+b\int_{\mathbb{R}^N}|\nabla u|^2d x\right)(-\Delta u+V(x)u)=f(x,u),\quad x\in \mathbb{R}^N,\\
u\in H^{2}(\mathbb{R}^N),
\end{array}
\right.
\end{equation}

where $a>0$, $b\geq 0$, $N\geq 5$, $V(x)\in\mathcal{C}(\mathbb{R}^N,\mathbb{R})$  and $f \in\mathcal{C}(\mathbb{R}^N\times \mathbb{R} ,\mathbb{R})$ is a function with a subcritical growth.

	In the recent years, bi-harmonic and non-local operators arise in the description of various phenomena in the pure mathematical research and concrete real-world applications, for example, for studying the traveling waves in suspension bridges (see \cite{lmc1,lmc2}) and describing the static deflection of an elastic plate in fluid (see \cite{plate}). Problem \eqref{p} is called a non-local problem because of the presence of the term $b\int_{\mathbb{R}^N}|\nabla u|^2d x$ which indicates
that \eqref{p} is not a pointwise identity. This causes some
mathematical difficulties which makes the study of \eqref{p} particularly interesting.

Note that if we consider $a=1$ and $b=0$ the fourth-order elliptic equation of Kirchhoff type above corresponds to the following nonlinear Schr\"{o}dinger equation in $\mathbb{R}^N$ ($N\geq 5$):
\begin{equation*}
\left\{
\begin{array}{ll}
\Delta^2 u - \Delta u+V(x)u=f(x,u),\quad x\in \mathbb{R}^N,\\
u\in H^{2}(\mathbb{R}^N).
\end{array}
\right.
\end{equation*}
This class of nonlinear elliptic equations in $\mathbb{R}^N$ has been studied  by many authors in literature motivated by mathematical and physical problems in particular to studying the standing wave solutions. Some important related results for bi-harmonic equations the interested reader is referred to are \cite{jm, decay, regularity, Noussair, mramos} and the references therein. On the other hand, problem \eqref{p} is related to the stationary analogue of the Kirchhoff equation
$$
u_{t t}-\left(a+b \int_{\Omega}|\nabla u|^{2}d x\right) \Delta u=f(x, u),
$$
where $\Omega \subset \mathbb{R}^{N}$ is a smooth bounded domain, which was proposed by Kirchhoff in \cite{k} as an extension of the classical D'Alembert's wave equation for free vibrations of elastic strings.

In the last years, many researchers have studied several questions about the following Kirchhoff-type elliptic equation
\begin{equation}\label{b1.3}
\begin{array}{lr}-\left(a+b \int_{\Omega}|\nabla u|^{2} d x\right) \Delta u=f(x, u), & {x \in \Omega}.  
\end{array}.
\end{equation}
where $\Omega$ is a domain in $\mathbb{R}^N.$ For instance, results on the existence and multiplicity of nontrivial solutions for \eqref{b1.3} have been established when $\Omega$ is bounded and $u=0$ on $\partial \Omega$, see for instance, \cite{b7,b8,b9,b10} and the references therein. Recently, many  authors studied the existence and multiplicity
of nontrivial solutions of \\
\begin{equation}
\left\{\begin{array}{l}{-\left(a+b \int_{\mathbb{R}^{N}}|\nabla u|^{2} d x\right) \Delta u+V(x) u=f(x, u), x \in \mathbb{R}^{N} ;} \\ 
{u \in H^{1}\left(\mathbb{R}^{N}\right)\quad (N=1,2,3),}\end{array}\right.
\end{equation}
see for example, \cite{b19,20imb,b21}.

Inspired by the works of \cite{bitao, bcub,bcubfrac}, a natural question is whether the same results occurs for the following Kirchhoff-type  biharmonic equation 
\begin{equation*}
\left\{
\begin{array}{ll}
\Delta^2 u+ \left(a+b\int_{\mathbb{R}^N}|\nabla u|^2 d x\right)(-\Delta u+V(x)u)=f(x,u),\quad x\in \mathbb{R}^N,\\
u\in H^{2}(\mathbb{R}^N),
\end{array}
\right.
\end{equation*}

where $a>0$, $b\geq 0$, $N\geq 5$, $V(x)\in\mathcal{C}(\mathbb{R}^N,\mathbb{R})$  and $f \in\mathcal{C}(\mathbb{R}^N\times \mathbb{R} ,\mathbb{R})$ satisfy the following hypotheses.

We now formulate assumptions for $V$ and $f$ in problem~\eqref{p}.
\vskip4pt
\noindent
$\bullet$ {\sc Assumptions on $V$.} 
\begin{description}
	\item[$(V)$]  $V \in \mathcal{C}\left(\mathbb{R}^{N},(0, \infty)\right)$ is 1-periodic in each of $x_1$,$x_2$,...,$x_N$ and $\inf_{\mathbb{R}^N}V>0$.

	\item[ $(V')$] $V(x)=V_{0}(x)+V_{1}(x), V_{0}, V_{1} \in \mathcal{C}\left(\mathbb{R}^{N}, \mathbb{R}\right), V_{0}(x)$ is 1 -periodic in $x_{1}, x_{2},...,x_{N},$ $V_{1}(x) \leq 0$ for $x \in \mathbb{R}^{N}$ and $V_{1} \in \mathcal{B},$ where $\mathcal{B}$ be the class of functions $b \in \mathcal{C}\left(\mathbb{R}^{N}\right) \cap L^{\infty}\left(\mathbb{R}^{N}\right)$ such that for every $\epsilon>0,$ the set $\left\{x \in \mathbb{R}^{N} :|b(x)| \geq \epsilon\right\}$ has finite Lebesgue measure;
\end{description}
\vskip4pt
\noindent
$\bullet$ {\sc Assumptions on $f$.} 
\begin{description}
	\item[$(f1)$] $ f(x,u)$ is 1-periodic in each of $x_1$,$x_2$,...,$x_N$ for all $u\in\mathbb{R}$ and there exist constants $C> 0$ and $p \in (4,2_*)$, where $2_*=2N/(N-4),$ such that 
	\[
	|f(x,u)|\leq C(1+|u|^{p-1}), \quad \mbox{for all}\quad (x,u)\in \mathbb{R}^N\times\mathbb{R};
  \]
  
 	\item[$(f2)$] $f(u)=o(|u|)$ uniformly in $x$ as $|u|\to 0$;
	
	\item[$(f3)$] 
		\[
		\lim_{|u|\to\infty}\frac{f(x,u)}{u^3}=\infty,\quad \mbox{uniformly in}\quad x;
		\]
	\item[$(f4)$] there exists $\mu \in(0,1)$ such that for any $t>0$ and $u \in \mathbb{R} \backslash\{0\}$
	$$
\left[\frac{f(x, u)}{u^{3}}-\frac{f(x, t u)}{(t u)^{3}}\right] \operatorname{sign}(1-t)+\mu aV(x)\frac{\left|1-t^{2}\right|}{(t u)^{2}} \geq 0.
$$

 \item[$(f5)$] $f(x, t)=f_{0}(x, t)+f_{1}(x, t), f_{0} \in \mathcal{C}\left(\mathbb{R}^{N} \times \mathbb{R}, \mathbb{R}\right), f_{0}(x, t)$ is 1 -periodic in $x_{1}, x_{2},...x_{N}$ and for any $x \in \mathbb{R}^{N}, t>0$ and $f_{1} \in \mathcal{C}\left(\mathbb{R}^{N} \times \mathbb{R}, \mathbb{R}\right),$ satisfies
  \begin{equation}\label{1.6}
\left|f_{1}(x, t)\right| \leq h(x)\left(|t|+|t|^{q-1}\right), \quad f_{1}(x, t) t \geq 0
\end{equation}
	where $F_{1}(x, t)=\int_{0}^{t} f_{1}(x, s) \mathrm{d} s, q \in\left(2,2_{*}\right)$ and $h \in \mathcal{B}.$

\item[$(f6)$] there exists $\mu \in(0,1)$ such that for any $t>0$ and $u \in \mathbb{R} \backslash\{0\}$
\begin{equation*}\label{1.5}
\left[\frac{f_{0}(x, u)}{u^{3}}-\frac{f_{0}(x, t \tau)}{(t u)^{3}}\right] \operatorname{sign}(1-t)+\mu aV_{0}(x) \frac{\left|1-t^{2}\right|}{(t u)^{2}} \geq 0.
\end{equation*}
\end{description}

\noindent
Now, let us introduce some notations.
Let
\[
H:=H^2(\mathbb{R}^N):=\left\{u\in L^{2}(\mathbb{R}^N): |\nabla u|, \Delta u\in   L^{2}(\mathbb{R}^N)\right\}
\]
endowed with the norm
\[
\|u\|=\Big(\int_{\mathbb{R}^N}(|\Delta u|^2+a(|\nabla u|^2+V(x)u^{2}) )d x\Big)^{1/2}
\]
and the inner product
\[
(u,v)=\int_{\mathbb{R}^N}(\Delta u \Delta v+a\nabla u \nabla v+aV(x)uv )d x.
\]

We consider the following energy functional

\begin{equation}\label{funcj}
J(u)=\frac{1}{2}\int_{\mathbb{R}^N}(|\Delta u|^2+a(|\nabla u|^2+V(x)u^2))d x+\frac{b}{4}(|\nabla u|^4_2)+\frac{b}{2}(|\nabla u|^2_2)\int_{\mathbb{R}^N}V(x)u^2d x-\int_{\mathbb{R}^N}F(x,u)d x
\end{equation}
for all $u\in E$. We can see that $J$ is well defined on $H$ and $J\in C^1(H, \mathbb{R})$ and its Gateaux
derivate is given by
\begin{align}\label{d2.2}
J'(u)v=(u,v)+b\left(|\nabla u|^2_2+\int_{\mathbb{R}^N}V(x)u^2d x\right)\int_{\mathbb{R}^N}\nabla u \nabla v d x  +b(|\nabla u|^2_2)\int_{\mathbb{R}^N}V(x)uvd x -\int_{\mathbb{R}^N}f(x,u)vd x,
\end{align}
for all $u$, $v$ in $H.$\\
\noindent
Now we can state our main result. In the periodic case, we establish the following theorem:
\begin{theorem}\label{thm1.1}
	Assume that $(V)$ and $(f1)$-$(f4)$ are satisfied.  Then problem
	\eqref{p} has a nontrivial solution $u\in \mathcal{N}$ such that $J(u)=\inf_{\mathcal{N}}J>0$, where 
	\begin{align}\label{1.5.1}
	\mathcal{N}:=\{u\in H:u \not =0, J'(u)u=0\}.
	\end{align}
\end{theorem}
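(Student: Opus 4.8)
The plan is to use the Nehari-manifold (fibering) method of Szulkin--Weth: treat $\mathcal N$ as a natural constraint on which $J$ is bounded below by a positive constant, and recover compactness from the $\mathbb Z^N$-periodicity of $V$ and $f$. First I would study the fibering maps $\psi_u(t):=J(tu)$, $t>0$, for fixed $u\in H\setminus\{0\}$: by $(f1)$--$(f2)$ one has $\psi_u(t)>0$ for small $t$, and by $(f3)$ (superquartic growth of $F$, which dominates the quartic Kirchhoff terms) $\psi_u(t)\to-\infty$. The crux is uniqueness of the critical point, and this is where $(f4)$ enters. Setting $G(t):=f(x,tu)/(tu)^3$, condition $(f4)$ gives, after dividing by $t-1$, letting $t\to 1$, and rescaling $u$, the pointwise bound $G'(t)\ge -2\mu aV(x)/(t^3u^2)$; multiplying by $u^4$, integrating, and differentiating $\psi_u'(t)/t^3$ yields
\[
\frac{d}{dt}\!\left(\frac{\psi_u'(t)}{t^{3}}\right)\le -\frac{2}{t^{3}}\Big(\|u\|^{2}-\mu a\!\int_{\mathbb R^N}\!V(x)u^{2}\,dx\Big)<0,
\]
since $\|u\|^{2}\ge a\int_{\mathbb R^N} Vu^{2}>\mu a\int_{\mathbb R^N} Vu^{2}$ as $\mu<1$ and $V>0$. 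Thus $t\mapsto t^{-3}\psi_u'(t)$ is strictly decreasing, so there is a unique $t_u>0$ with $t_uu\in\mathcal N$, realizing $\max_{t>0}J(tu)$.

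Next I would show $m:=\inf_{\mathcal N}J>0$. On $\mathcal N$ the relation $\|u\|^2\le\int_{\mathbb R^N} f(x,u)u$, with $(f1)$--$(f2)$ and $H^2\hookrightarrow L^2\cap L^p$, forces $\|u\|\ge\rho>0$; the maximality above then gives $J(u)\ge J(\tfrac{\rho}{\|u\|}u)\ge c_0\rho^2>0$ for small $\rho$ (dropping the nonnegative Kirchhoff terms). I would take a minimizing sequence $\{u_n\}\subset\mathcal N$ with $J(u_n)\to m$, which, via the homeomorphism $w\mapsto t_ww$ from the unit sphere onto $\mathcal N$, may be assumed to satisfy $J'(u_n)\to 0$. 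Boundedness follows by contradiction: if $\|u_n\|\to\infty$, set $w_n=u_n/\|u_n\|$; if $\{w_n\}$ does not vanish then, after a translation, it has a nontrivial weak limit, forcing $\int_{\mathbb R^N} F(x,u_n)/\|u_n\|^4\to\infty$ by $(f3)$ and Fatou, incompatible with $J(u_n)\to m$; and vanishing ($w_n\to 0$ in $L^p$ by Lions' lemma) contradicts $m\ge\liminf_n J(Rw_n)\ge R^2(\tfrac12-\varepsilon C)$ for every $R>0$ (again using that $u_n$ maximizes $J$ on its ray).

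Exploiting periodicity, I would then rule out vanishing for the bounded sequence and translate by suitable $y_n\in\mathbb Z^N$ so that $\tilde u_n:=u_n(\cdot-y_n)\rightharpoonup u\ne 0$. By $1$-periodicity of $V$ and $f$, $\{\tilde u_n\}$ remains a Palais--Smale sequence at level $m$, and passing to the limit in $J'(\tilde u_n)\varphi=o(1)$ for $\varphi\in C_c^\infty$ shows $J'(u)=0$, hence $u\in\mathcal N$ and $J(u)\ge m$. For the reverse inequality I would integrate the bound $G'(t)\ge -2\mu aV/(t^3u^2)$ to obtain
\[
\tfrac14 f(x,u)u-F(x,u)\ge -\tfrac{\mu a}{4}V(x)u^{2},
\]
so that $\Psi(u):=J(u)-\tfrac14 J'(u)u=\tfrac14\|u\|^2+\int_{\mathbb R^N}(\tfrac14 fu-F)$ splits as the positive-definite form $\tfrac14\int_{\mathbb R^N}(|\Delta u|^2+a|\nabla u|^2+(1-\mu)aVu^2)$ plus the nonnegative integrand $\tfrac14 fu-F+\tfrac{\mu a}{4}Vu^2$, and is therefore sequentially weakly lower semicontinuous (weak lower semicontinuity of the norm and Fatou). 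Since $u_n,u\in\mathcal N$ annihilate the $J'(\cdot)\cdot$ term, $J(u)=\Psi(u)\le\liminf\Psi(\tilde u_n)=\liminf J(\tilde u_n)=m$, whence $J(u)=m$ and $u$ is the sought ground state.

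The main obstacle is exactly that $f(x,u)u-4F(x,u)$ is sign-changing, which breaks both the usual monotonicity of $f/u^3$ (needed for the fibering uniqueness) and the usual Fatou argument (needed for attainment); both are salvaged by the $\mu aV$-correction in $(f4)$ with $\mu<1$, which lets the linear potential part of $\|\cdot\|^2$ absorb the defect. A secondary technical point is the nonlocal coefficient $b\big(|\nabla\tilde u_n|_2^2+\int_{\mathbb R^N}V\tilde u_n^2\big)$: I would need to control its behaviour along $\tilde u_n$ (via local strong convergence, e.g. a Brezis--Lieb splitting) so that the weak limit solves the original equation and not one with a perturbed coefficient.
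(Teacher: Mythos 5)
Your strategy is broadly the same as the paper's (Nehari fibering, uniqueness of the projection from $(f4)$, positivity of the infimum, a bounded Palais--Smale sequence, $\mathbb{Z}^N$-translations to get a nontrivial weak limit, and Fatou plus weak lower semicontinuity of $J-\tfrac14 J'(\cdot)\cdot$ for attainment), but there is one genuine gap which you flag and then fail to close: the passage to the limit in $J'(\tilde u_n)\varphi=o(1)$. Because of the nonlocal coefficient, weak convergence only gives, along a subsequence, $b\bigl(|\nabla\tilde u_n|_2^2+\int_{\mathbb{R}^N}V\tilde u_n^2\,dx\bigr)\to A$ and $b|\nabla\tilde u_n|_2^2\to B$ with $A\geq b\bigl(|\nabla u|_2^2+\int_{\mathbb{R}^N}Vu^2\,dx\bigr)$ and $B\geq b|\nabla u|_2^2$, possibly strictly. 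Hence the weak limit $u$ a priori solves only a perturbed equation with coefficients $A,B$; you cannot conclude $J'(u)=0$, so you cannot place $u$ in $\mathcal N$, and your final step (which uses $J'(u)u=0$ to identify $J(u)$ with $\Psi(u)$) collapses. Your proposed remedy --- local strong convergence or a Brezis--Lieb splitting --- does not repair this: Brezis--Lieb merely isolates the defect $|\nabla(\tilde u_n-u)|_2^2$, it does not make it vanish, and gradient mass can escape to infinity even for non-vanishing sequences. What is actually needed is the variational argument of Lemma~\ref{2.9}: if one of the defects is strictly positive, then comparing $J'(u)u$ with the limit identity gives $J'(u)u<0$; by Lemma~\ref{lemma2.4} there is $\tilde t>0$ with $\tilde t u\in\mathcal N$, and inserting this into the Fatou chain together with inequality \eqref{2.3} yields $c_{\mathcal N}\geq d\geq J(u)-\tfrac14 J'(u)u\geq J(\tilde t u)-\tfrac{\tilde t^4}{4}J'(u)u>c_{\mathcal N}$, a contradiction. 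Only after this does one know the nonlocal coefficients converge to the correct values, whence $J'(u)=0$.

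A secondary, repairable, flaw: your uniqueness argument differentiates $(f4)$ in $t$ (the bound on $G'(t)$), but $f$ is only assumed continuous, so $G$ need not be differentiable. The derivative-free version is available: $(f4)$, rescaled along the ray $\{tu: t>0\}$, says exactly that $t\mapsto f(x,tu)/(tu)^3-\mu aV(x)/(tu)^2$ is nondecreasing, and writing $t^{-3}J'(tu)u=t^{-2}\bigl(\|u\|^2-\mu a\int_{\mathbb{R}^N}Vu^2\,dx\bigr)+\mathrm{const}-\int_{\mathbb{R}^N}\bigl[f(x,tu)/(tu)^3-\mu aV(x)/(tu)^2\bigr]u^4\,dx$ shows this map is strictly decreasing (strictness from $\mu<1$), with no differentiation of $f$; alternatively, integrate $(f4)$ as in Lemma~\ref{lemma2.1} to get \eqref{2.3}, from which uniqueness, $J(u)=\max_{t\geq0}J(tu)$ on $\mathcal N$, the inequality \eqref{2.14}, and boundedness of Palais--Smale sequences all follow at once. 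Your remaining deviations from the paper (the Szulkin--Weth sphere reduction in place of the paper's mountain-pass construction in Lemma~\ref{lemma2.8}, and the contradiction argument for boundedness in place of the one-line estimate from \eqref{2.14}) are sound but more laborious than necessary.
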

\noindent
The next theorem gives a answer when we are in the asymptotically periodic case.
\begin{theorem}\label{thm1.2}
	Assume that $(V')$ and $(f5)$-$(f6)$ are satisfied.  Then problem
	\eqref{p} has a nontrivial solution $u\in \mathcal{N}$ such that $J(u)=\inf_{\mathcal{N}}J>0$, where 
	\begin{align}\label{1.5.2}
	\mathcal{N}:=\{u\in H:u \not =0, J'(u)u=0\}.
	\end{align}
\end{theorem}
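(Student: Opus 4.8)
The plan is to obtain the ground state as a minimizer of $J$ over the Nehari manifold $\mathcal{N}$ and to recover the compactness lost at infinity by comparison with the periodic ``limit problem'' governed by
\[
J_0(u)=\tfrac12\int_{\mathbb{R}^N}\big(|\Delta u|^2+a|\nabla u|^2+aV_0u^2\big)\,dx+\tfrac{b}{4}|\nabla u|_2^4+\tfrac{b}{2}|\nabla u|_2^2\!\int_{\mathbb{R}^N}\!V_0u^2\,dx-\int_{\mathbb{R}^N}\!F_0(x,u)\,dx,
\]
with Nehari manifold $\mathcal{N}_0$ and ground-state level $m_0:=\inf_{\mathcal{N}_0}J_0$. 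First I would observe that the periodic data $(V_0,f_0)$ satisfy the hypotheses of Theorem~\ref{thm1.1} (the subcritical growth of $f_0=f-f_1$ being inherited from $(f1)$ and $(f5)$, and $(f6)$ playing the role of $(f4)$), so that Theorem~\ref{thm1.1} furnishes a ground state $w_0\in\mathcal{N}_0$ with $J_0(w_0)=m_0>0$. For the full functional I would then set up the generalized Nehari manifold: using $(f1)$--$(f3)$, the sign condition $f_1(x,t)t\ge0$ of $(f5)$ and the monotonicity $(f6)$, one verifies that the fibre map $t\mapsto J(tu)$ has, for each $u\neq0$, a unique critical point $t_u>0$ which is its global maximum, so that $u\mapsto t_u u$ is a homeomorphism of the unit sphere onto $\mathcal{N}$; mountain-pass geometry near the origin (from $(f1)$--$(f2)$ and the nonnegativity of the Kirchhoff terms) gives $\inf_{\mathcal{N}}\|\cdot\|>0$ and hence $m:=\inf_{\mathcal{N}}J>0$.

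I would next compare the two levels. Since $V_1\le0$ and $F_1\ge0$ one has $J(u)\le J_0(u)$ for all $u$; projecting $w_0$ onto $\mathcal{N}$ at its radial parameter $s_0$ gives
\[
m\le J(s_0 w_0)\le J_0(s_0 w_0)\le\max_{s>0}J_0(sw_0)=J_0(w_0)=m_0,
\]
with strict inequality as soon as the perturbation is nontrivial (if $V_1\equiv0$ and $f_1\equiv0$ we are already in the setting of Theorem~\ref{thm1.1}); thus I may assume $m<m_0$. Ekeland's principle on the $C^1$-manifold $\mathcal{N}$, equivalently a deformation on the sphere through the homeomorphism above, produces a Cerami sequence $\{u_n\}\subset\mathcal{N}$ with $J(u_n)\to m$ and $(1+\|u_n\|)\|J'(u_n)\|\to0$. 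Its boundedness follows from the identity
\[
J(u)-\tfrac14J'(u)u=\tfrac14\|u\|^2+\tfrac14\int_{\mathbb{R}^N}\big(f(x,u)u-4F(x,u)\big)\,dx,
\]
in which the nonlocal Kirchhoff contributions cancel, together with $(f3)$ and the integrated form $f_0(x,u)u-4F_0(x,u)\ge-\mu aV_0(x)u^2$ of $(f6)$, the weights $V_1,h\in\mathcal{B}$ contributing only compact lower-order terms. Passing to a subsequence, $u_n\rightharpoonup u$ in $H$ and $u_n\to u$ in $L^s_{\mathrm{loc}}$.

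The decisive step is to exclude $u=0$. If $u=0$, then because $V_1,h\in\mathcal{B}$ vanish at infinity and $u_n\rightharpoonup0$, the perturbation integrals $\int V_1u_n^2$, $\int F_1(x,u_n)$ and $\int f_1(x,u_n)u_n$ all tend to $0$, whence $J(u_n)-J_0(u_n)\to0$ and $\|J'(u_n)-J_0'(u_n)\|\to0$; thus $\{u_n\}$ is an asymptotic Cerami sequence for the $\mathbb{Z}^N$-invariant functional $J_0$ at the level $m$, asymptotically lying on $\mathcal{N}_0$. A Lions-type vanishing lemma forbids $\int_{B_1(y)}u_n^2\to0$ uniformly in $y$ (that would force $u_n\to0$ in $L^p$ and, via the Nehari relation, $\|u_n\|\to0$, contradicting $\inf_{\mathcal{N}}\|\cdot\|>0$), so there are $y_n\in\mathbb{Z}^N$ with $u_n(\cdot+y_n)\rightharpoonup\bar w\neq0$ solving the periodic equation, giving $m_0\le J_0(\bar w)\le m$ and contradicting $m<m_0$. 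Hence $u\neq0$, and a by-now standard argument identifies $u$ as a ground state: Fatou's lemma applied to the displayed identity along $\{u_n\}$ yields $J(u)\le\liminf J(u_n)=m$, while $u\in\mathcal{N}$ forces $J(u)\ge m$, so $J(u)=m$ and $J'(u)=0$.

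The hardest points are twofold. Analytically, the nonlocal coefficient $b\big(|\nabla u_n|_2^2+\int V u_n^2\big)$ is only weakly lower semicontinuous, so to pass to the limit in $J'(u_n)\to0$ and identify $u$ as a solution with the correct coefficient I must upgrade $\nabla u_n\rightharpoonup\nabla u$ to $|\nabla u_n|_2\to|\nabla u|_2$; this I would extract from the Cerami condition by testing with $u_n-u$ and using the compactness of the $\mathcal{B}$-perturbations. Structurally, the genuine difficulty is that $f(x,u)u-4F(x,u)$ changes sign: without a constant-sign (Ambrosetti--Rabinowitz or monotone) behaviour, neither the boundedness of Cerami sequences nor the inequality $J(u)\le\liminf J(u_n)$ is automatic, and both are rescued exactly by the constraint $\mu<1$ in $(f6)$, which is what lets $J(u)-\tfrac14J'(u)u$ dominate $\|u\|^2$. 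Guaranteeing throughout that the $\mathcal{B}$-weights $V_1,h$ enter only as compact, lower-order terms, so that the strict comparison $m<m_0$ can be brought to bear, is the technical heart of the asymptotically periodic case.
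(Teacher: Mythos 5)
Your overall architecture (periodic limit functional $J_0$, comparison of levels, Lions-type dichotomy on a Cerami sequence) matches the paper's, but there is one genuine gap that your argument cannot survive: the strict inequality $m<m_0$ (in the paper's notation, $c_{\mathcal{N}}<c_{\mathcal{N}_0}$). You assert it holds ``as soon as the perturbation is nontrivial,'' but the hypotheses $(V')$ and $(f5)$ only yield the non-strict chain $m\le J(s_0w_0)\le J_0(s_0w_0)\le J_0(w_0)=m_0$. To make the middle inequality strict you would need $\int_{\mathbb{R}^N}V_1(x)(s_0w_0)^2\,dx<0$ or $\int_{\mathbb{R}^N}F_1(x,s_0w_0)\,dx>0$, and neither follows from $V_1\not\equiv 0$ or $f_1\not\equiv 0$. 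For instance, the bound \eqref{1.6} permits $f_1(x,t)$ to vanish for all $|t|\le \|s_0w_0\|_{\infty}$, in which case $F_1(x,s_0w_0(x))\equiv 0$; and $\int_{\mathbb{R}^N}V_1(x)(s_0w_0)^2\,dx<0$ requires $w_0$ not to vanish a.e.\ on the open set $\{V_1<0\}$, which for the fourth-order operator $\Delta^2-\Delta+V$ is a unique-continuation statement you neither prove nor cite (there is no maximum principle for the biharmonic operator, so positivity of the periodic ground state is not available). Since your exclusion of the vanishing case $u=0$ rests entirely on contradicting $m<m_0$, the proof collapses exactly where compactness is at stake.

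The paper is organized precisely so that no strict inequality is ever needed, and this is the structural idea your proposal misses: in the vanishing case, the translated weak limit $v_0\neq 0$ is a critical point of the periodic functional $J_0$, Fatou's lemma gives $J_0(v_0)\le d\le c_{\mathcal{N}}$, and then $v_0$ is projected back onto $\mathcal{N}$ via Lemma \ref{lemma2.4}: with $t_0v_0\in\mathcal{N}$ one has $J(t_0v_0)\ge c_{\mathcal{N}}$, while $J\le J_0$ (from \eqref{4.7}) together with the fibre maximality $J_0(v_0)=\max_{t\ge 0}J_0(tv_0)$ forces $J(t_0v_0)=c_{\mathcal{N}}$; thus the vanishing case produces a minimizer of $J$ on $\mathcal{N}$ rather than a contradiction. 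That constrained minimizer is then shown to be a genuine critical point by Lemma \ref{lemma2.10}, via the quantitative deformation lemma and Brouwer degree --- a lemma your scheme omits but which is indispensable once one works with only the non-strict comparison \eqref{4.14}. A secondary weak point: your plan to upgrade $\nabla u_n\rightharpoonup\nabla u$ to $|\nabla u_n|_2\to|\nabla u|_2$ ``by testing with $u_n-u$'' is not viable in $\mathbb{R}^N$, since $\int_{\mathbb{R}^N}f(x,u_n)(u_n-u)\,dx$ does not tend to $0$ without compactness; the paper instead obtains this convergence in Lemma \ref{2.9} by a contradiction argument combining Fatou's lemma with the key inequality \eqref{2.3}.
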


\begin{lemma}\label{lemma2.1}
	Assume that $(f1)$-$(f4)$ hold. Then for any $u \in H^2(\mathbb{R}^N )$,
	\begin{equation}\label{2.3}
	J(u)\geq J(tu)+\frac{1-t^4}{4}J'(u)u+(1-\mu)\frac{(1-t^2)^2}{4}\|u\|^2,\quad t\geq 0.
	\end{equation}
\end{lemma}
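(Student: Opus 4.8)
The plan is to read \eqref{2.3} as a Szulkin--Weth--type comparison and to reduce it, after exploiting the exact scaling of each piece of $J$ under the dilation $u\mapsto tu$, to a pointwise inequality that merely repackages $(f4)$. Fix $u\in H$ and set
\[
\Phi(t):=J(u)-J(tu)-\frac{1-t^4}{4}J'(u)u-(1-\mu)\frac{(1-t^2)^2}{4}\|u\|^2,\qquad t\ge 0;
\]
the goal is to prove $\Phi(t)\ge 0$.

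First I would expand $\Phi$ using \eqref{funcj} and \eqref{d2.2}, organising the terms by their homogeneity. With the abbreviations $A=\|u\|^2$, $B=\int_{\mathbb{R}^N}|\nabla u|^2\,dx$ and $C=\int_{\mathbb{R}^N}V(x)u^2\,dx$ one has $J(tu)=\frac{t^2}{2}A+\frac{b}{4}t^4B^2+\frac{b}{2}t^4BC-\int_{\mathbb{R}^N}F(x,tu)\,dx$ and $J'(u)u=A+bB^2+2bBC-\int_{\mathbb{R}^N}f(x,u)u\,dx$. Two bookkeeping facts drive the computation: the two quartic terms in $b$ (which scale like $t^4$) are exactly killed by the weight $\tfrac{1-t^4}{4}$, while the quadratic term $\tfrac12A$ (scaling like $t^2$) leaves the residual $\tfrac{(1-t^2)^2}{4}A$, from which the subtracted $(1-\mu)\tfrac{(1-t^2)^2}{4}A$ removes all but $\mu\tfrac{(1-t^2)^2}{4}A$. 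Carrying this out I expect to reach
\[
\Phi(t)=\frac{\mu(1-t^2)^2}{4}\int_{\mathbb{R}^N}\big(|\Delta u|^2+a|\nabla u|^2\big)\,dx+\int_{\mathbb{R}^N}g(x,u(x),t)\,dx,
\]
with $g(x,s,t):=\dfrac{\mu a(1-t^2)^2}{4}V(x)s^2+F(x,ts)-F(x,s)+\dfrac{1-t^4}{4}f(x,s)s$.

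The first integral is nonnegative, so everything hinges on the pointwise claim $g(x,s,t)\ge 0$ for every $x$, every $s\in\mathbb{R}$ and every $t\ge 0$; this is the step I expect to be the crux. The case $s=0$ is trivial, so fix $x$ and $s\neq 0$ and view $\varphi(t):=g(x,s,t)$ as a one-variable function. Since $\varphi(1)=0$, it suffices to show that $t=1$ is a global minimum, i.e. $\varphi'(t)\,\operatorname{sign}(1-t)\le 0$. Differentiating,
\[
\varphi'(t)=f(x,ts)s-t^3f(x,s)s-\mu t(1-t^2)aV(x)s^2,
\]
and I would then rewrite $f(x,ts)s-t^3f(x,s)s=-t^3s^4\big[\tfrac{f(x,s)}{s^3}-\tfrac{f(x,ts)}{(ts)^3}\big]$, multiply through by $\operatorname{sign}(1-t)$, and use the elementary identity $(1-t^2)\operatorname{sign}(1-t)=|1-t^2|$ valid for $t\ge 0$. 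The outcome is $-\varphi'(t)\operatorname{sign}(1-t)=t^3s^4\,\Lambda(x,s,t)$, where $\Lambda$ is precisely the left-hand side of $(f4)$ evaluated at $u=s$. By $(f4)$ we get $\Lambda\ge 0$, hence $\varphi'(t)\operatorname{sign}(1-t)\le 0$, so $\varphi$ decreases on $[0,1]$ and increases on $[1,\infty)$ and therefore $\varphi(t)\ge\varphi(1)=0$.

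The main obstacle is this last manipulation: the point is to match $\varphi'$ to the exact algebraic shape of $(f4)$, which requires the sign/absolute-value identity above together with clearing the denominators $s^3,(ts)^3,(ts)^2$ by the positive factor $t^3s^4$, so that the two summands $\tfrac{f(x,s)}{s^3}-\tfrac{f(x,ts)}{(ts)^3}$ and $\mu aV(x)\tfrac{|1-t^2|}{(ts)^2}$ of $(f4)$ align with the two surviving pieces of $\varphi'$. Once the pointwise inequality $g\ge 0$ is in hand, integrating it over $\mathbb{R}^N$ and adding back the nonnegative $|\Delta u|^2$ and $a|\nabla u|^2$ contributions yields $\Phi(t)\ge 0$, which is exactly \eqref{2.3}. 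Note that $(f1)$ and $(f2)$ are used only to guarantee the finiteness of the integrals (as in the already-established $C^1$-regularity of $J$), whereas $(f3)$ plays no role in this lemma.
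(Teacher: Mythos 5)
Your proposal is correct and follows essentially the same route as the paper: the same expansion of $J(u)-J(tu)-\frac{1-t^4}{4}J'(u)u$ driven by the identity $\frac{1-t^2}{2}=\frac{1-t^4}{4}+\frac{(1-t^2)^2}{4}$, reduced to exactly the pointwise inequality \eqref{2.4} (your $g\ge 0$) and then to $\int_{\mathbb{R}^N} aV(x)u^2\,dx\le\|u\|^2$. The only cosmetic difference is that the paper derives the pointwise inequality by integrating the $(f4)$-expression against $u^4s^3$ over $s$ between $t$ and $1$, whereas you derive it by the equivalent differential form of the same computation, showing $\varphi'(t)\operatorname{sign}(1-t)\le 0$ so that $t=1$ is a global minimum of $\varphi$.
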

\begin{proof}
	For any $x\in \mathbb{R}^N$ and $s\in \mathbb{R}^+$, using $(f4)$, for all $t\geq 0$, we have 
	\begin{align}\label{2.4}
	0\leq&  \int_{\tau}^{1}\left(\frac{f(x,t)}{t^3}+\frac{f(x,st)}{(st)^3}+\frac{\mu a V(x)(1-s^2)}{st}^2\right)t^4s^3 ds \nonumber\\
	=& \frac{1-t^4}{4}tf(x,t)-(F(x,t)-F(x,\tau t))+\frac{(1-\tau^2)^2}{4}t^2\mu a V(x).
	\end{align}
	Then, for all $u\in H$, we obtain
	\begin{align*}
	 J(u)-J(tu) = &\frac{1-t^2}{2}\|u\|^2+\frac{b(1-t^4)}{4}|\nabla u|^4_2+\frac{1-t^4}{4}2b|\nabla u|^2_2\int_{\mathbb{R}^N}V(x)u^2d x-\int_{\mathbb{R}^N}(F(x,u)-F(x,tu))d x\\
	 =&\frac{1-t^4}{4}\|u\|^2+\frac{b(1-t^4)}{4}|\nabla u|^4_2+\frac{1-t^4}{4}2b|\nabla u|^2_2\int_{\mathbb{R}^N}V(x)u^2d x+\frac{(1-t^2)^2}{4}\|u\|^2\\
	 &\quad-\int_{\mathbb{R}^N}(F(x,u)-F(x,tu))d x\\
	 =& \frac{1-t^4}{4}J'(u)u+\frac{1-t^4}{4}\int_{\mathbb{R}^N}f(x,u)ud x+\frac{(1-t^2)^2}{4}\|u\|^2-\int_{\mathbb{R}^N}(F(x,u)-F(x,tu))d x\\
	 \geq &\frac{1-t^4}{4}J'(u)u+\frac{(1-t^2)^2}{4}\|u\|^2-\int_{\mathbb{R}^N} \frac{(1-t^2)^2}{4}\mu a V(x)u^2d x\\
	 \geq &\frac{1-t^4}{4}J'(u)u+(1-\mu)\frac{(1-t^2)^2}{4}\|u\|^2,\quad t\geq 0.
	\end{align*}
\end{proof}

\begin{corollary}
	Assume that $(f1)$-$(f4)$ are satisfied. Then, if $u\in \mathcal{N}$, we obtain
		\begin{equation}\label{2.5}
	J(u)\geq J(tu)+(1-\mu)\frac{(1-t^2)^2}{4}\|u\|^2,\quad t\geq 0.
	\end{equation}
\end{corollary}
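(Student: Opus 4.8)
The plan is to obtain this as an immediate specialization of Lemma~\ref{lemma2.1} to the elements of the Nehari-type set $\mathcal{N}$; the only new ingredient beyond that lemma is the defining constraint of $\mathcal{N}$.

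First I would fix $u \in \mathcal{N}$ and $t \geq 0$. By the definition in~\eqref{1.5.1} we have $u \neq 0$ and, crucially, $J'(u)u = 0$. Since Lemma~\ref{lemma2.1} holds for every $u \in H^2(\mathbb{R}^N)$, it applies to this particular $u$ and gives
\[
J(u) \geq J(tu) + \frac{1-t^4}{4}J'(u)u + (1-\mu)\frac{(1-t^2)^2}{4}\|u\|^2.
\]

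Next I would simply insert the Nehari identity $J'(u)u = 0$, which annihilates the middle term on the right-hand side. This yields
\[
J(u) \geq J(tu) + (1-\mu)\frac{(1-t^2)^2}{4}\|u\|^2, \quad t \geq 0,
\]
which is precisely~\eqref{2.5}, completing the argument.

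There is essentially no obstacle at this step: all of the analytic content—in particular the exploitation of hypothesis $(f4)$ to control $F(x,u) - F(x,tu)$ through the integral estimate~\eqref{2.4}—has already been absorbed into the proof of Lemma~\ref{lemma2.1}, so the corollary reduces to a one-line substitution. Its role is nevertheless conceptually important: because $\mu \in (0,1)$, the penalty term $(1-\mu)\frac{(1-t^2)^2}{4}\|u\|^2$ is nonnegative and vanishes only at $t=1$, so~\eqref{2.5} shows that for $u \in \mathcal{N}$ the map $t \mapsto J(tu)$ attains a strict global maximum at $t=1$. This monotonicity property is exactly what is needed to carry out the minimization of $J$ over $\mathcal{N}$ in the proofs of Theorems~\ref{thm1.1} and~\ref{thm1.2}.
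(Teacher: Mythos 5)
Your proof is correct and coincides with the paper's own (implicit) argument: the corollary follows immediately from Lemma~\ref{lemma2.1} by inserting the Nehari constraint $J'(u)u=0$, which kills the middle term in \eqref{2.3}. Nothing further is needed.
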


\begin{corollary}\label{2.2}
		Assume that $(f1)$-$(f4)$ are satisfied. Then, if $u\in \mathcal{N}$, we obtain
	\begin{equation}\label{2.6}
	J(u)=\max_{t\geq 0}J(tu).
	\end{equation}
\end{corollary}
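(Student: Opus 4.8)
The plan is to read off the conclusion directly from the preceding corollary, whose inequality \eqref{2.5} already holds for every $u\in\mathcal{N}$. Recall that this inequality is obtained from Lemma \ref{lemma2.1} by noting that $u\in\mathcal{N}$ satisfies $J'(u)u=0$, so the term $\frac{1-t^4}{4}J'(u)u$ in \eqref{2.3} vanishes and one is left with
$$
J(u)\geq J(tu)+(1-\mu)\frac{(1-t^2)^2}{4}\|u\|^2,\qquad t\geq 0.
$$

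First I would observe that the remainder term is non-negative. The hypothesis $(f4)$ supplies $\mu\in(0,1)$, hence $1-\mu>0$; trivially $(1-t^2)^2\geq 0$; and $\|u\|^2>0$ because $u\neq 0$ on $\mathcal{N}$. Therefore $(1-\mu)\frac{(1-t^2)^2}{4}\|u\|^2\geq 0$, and the displayed inequality simplifies to $J(u)\geq J(tu)$ for all $t\geq 0$. This yields one direction, namely $J(u)\geq\sup_{t\geq 0}J(tu)$.

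For the reverse direction I would note that $t=1$ is an admissible choice in the supremum, and $J(1\cdot u)=J(u)$ is among the competing values; hence $\sup_{t\geq 0}J(tu)\geq J(u)$. Combining the two inequalities shows that the supremum is attained at $t=1$ and equals $J(u)$, which is exactly \eqref{2.6}.

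The argument is essentially immediate once \eqref{2.5} is available, so I do not anticipate a genuine obstacle. The only point deserving care is the strict positivity $\|u\|^2>0$, which guarantees that the remainder term is genuinely non-negative and that no degenerate case intervenes; this is secured precisely by the defining condition $u\neq 0$ in the Nehari set $\mathcal{N}$.
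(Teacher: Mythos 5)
Your proof is correct and follows exactly the route the paper intends: specialize \eqref{2.5} (i.e., \eqref{2.3} with $J'(u)u=0$ on $\mathcal{N}$), discard the non-negative remainder $(1-\mu)\frac{(1-t^2)^2}{4}\|u\|^2$ to get $J(u)\geq J(tu)$ for all $t\geq 0$, and note that $t=1$ attains equality. The paper leaves this argument implicit, and your write-up supplies precisely the missing details.
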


\begin{lemma}
	Assume that $(f1)$-$(f4)$ are satisfied. Then, for any $s\in \mathbb{R}$ and $ x\in \mathbb{R}^N$ ,
	\begin{equation}\label{2.14}
	0\leq \frac{1}{4}f(x,s)s-F(x,s)+\frac{\mu a V(x)}{4}s^2.
	\end{equation}
\end{lemma}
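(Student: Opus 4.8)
The plan is to recognize that \eqref{2.14} is precisely the limiting form, as the lower integration endpoint tends to zero, of the pointwise computation already carried out in the proof of Lemma~\ref{lemma2.1} [namely \eqref{2.4}]. Concretely, I would fix $s\in\mathbb{R}\setminus\{0\}$ and $x\in\mathbb{R}^N$ and apply $(f4)$ with the roles $u=s$ and scaling parameter $r\in(\tau,1)\subset(0,1)$, so that $\operatorname{sign}(1-r)=1$ and $|1-r^2|=1-r^2$. This yields the pointwise inequality $\frac{f(x,s)}{s^3}-\frac{f(x,rs)}{(rs)^3}+\mu a V(x)\frac{1-r^2}{(rs)^2}\ge 0$ for every such $r$, and it is valid for both signs of $s$ since $u$ in $(f4)$ ranges over all of $\mathbb{R}\setminus\{0\}$.

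Multiplying this nonnegative quantity by the manifestly nonnegative weight $s^4 r^3$ and integrating in $r$ over $(\tau,1)$ preserves the sign, and the three resulting integrals are elementary: $\int_\tau^1 \frac{f(x,s)}{s^3}s^4 r^3\,dr = \frac{1-\tau^4}{4}\,sf(x,s)$, the middle term evaluates via the substitution $w=rs$ to $F(x,s)-F(x,\tau s)$, and $\int_\tau^1 \mu a V(x)\frac{1-r^2}{(rs)^2}s^4 r^3\,dr = \frac{(1-\tau^2)^2}{4}\,\mu a V(x)s^2$ after the elementary computation $\int_\tau^1(r-r^3)\,dr=\frac{(1-\tau^2)^2}{4}$. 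Collecting terms produces, for every $\tau\in(0,1)$, the estimate $0\le \frac{1-\tau^4}{4}\,sf(x,s)-\bigl(F(x,s)-F(x,\tau s)\bigr)+\frac{(1-\tau^2)^2}{4}\,\mu a V(x)s^2$.

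Finally I would let $\tau\to 0^+$. By continuity of $F$ in its second argument together with $F(x,0)=0$ we have $F(x,\tau s)\to 0$, while $1-\tau^4\to 1$ and $(1-\tau^2)^2\to 1$, so the inequality collapses to exactly \eqref{2.14}; the case $s=0$ is immediate since every term vanishes. The only point deserving a word of care is the sign bookkeeping for $s<0$, but in fact everything goes through verbatim: the bracket furnished by $(f4)$ is nonnegative for every $u=s\in\mathbb{R}\setminus\{0\}$, the weight $s^4 r^3$ does not see the sign of $s$, and the substitution $w=rs$ combined with the fundamental theorem of calculus returns $F(x,s)-F(x,\tau s)$ irrespective of the sign of $s$. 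Hence no separate argument for negative $s$ is required, and this is the mild subtlety I would expect to be the main thing to get right.
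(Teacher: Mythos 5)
Your proof is correct and follows essentially the same route as the paper: the paper's proof simply takes $t=0$ in the already-established inequality \eqref{2.4} (the integrated form of $(f4)$ obtained in the proof of Lemma \ref{lemma2.1}), which is exactly the estimate you reconstruct before letting $\tau\to 0^+$. The only cosmetic difference is that you re-derive that inequality with clean sign bookkeeping and pass to the limit rather than substituting the endpoint value directly.
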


\begin{proof}
	It is enough to take $t = 0$ in \eqref{2.4}.
\end{proof}

\begin{lemma}\label{lemma2.4}
Assume that $(f1)$-$(f4)$ are satisfied. Then, if $u \in H\setminus \{0\}$, there exists unique $t_u > 0$ such that $t_u u \in \mathcal{N}$.
\end{lemma}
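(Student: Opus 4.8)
The plan is to fix $u\in H\setminus\{0\}$ and study the scalar function $g(t):=J(tu)$ for $t\ge 0$. Since $J\in C^1(H,\mathbb{R})$, the chain rule gives $g'(t)=J'(tu)[u]$ and hence $t\,g'(t)=J'(tu)(tu)$. Consequently, for $t>0$ one has $tu\in\mathcal{N}$ if and only if $g'(t)=0$, so the whole statement reduces to proving that $g$ has exactly one critical point on $(0,\infty)$. Writing things out,
$$g(t)=\frac{t^2}{2}\|u\|^2+\frac{t^4}{4}\Big(b|\nabla u|^4_2+2b|\nabla u|^2_2\int_{\mathbb{R}^N}V(x)u^2\,dx\Big)-\int_{\mathbb{R}^N}F(x,tu)\,dx,$$
which I will analyse at the two ends of the interval.

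For the existence of $t_u$ I would examine the behaviour of $g$ near $0$ and near $\infty$. As $t\to 0^+$, hypotheses $(f1)$ and $(f2)$ give, for every $\varepsilon>0$, a constant $C_\varepsilon$ with $\int_{\mathbb{R}^N}F(x,tu)\,dx\le \varepsilon t^2\int_{\mathbb{R}^N}u^2\,dx+C_\varepsilon t^p\int_{\mathbb{R}^N}|u|^p\,dx$; since $p\in(4,2_*)$ and $\int_{\mathbb{R}^N}u^2\,dx\le C\|u\|^2$ by $(V)$ and the Sobolev embedding, choosing $\varepsilon$ small yields $g(t)\ge c\,t^2-C\,t^p>0$ for small $t>0$, while $g(0)=0$. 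As $t\to\infty$, condition $(f3)$ upgrades to the superquartic growth $F(x,s)/s^4\to+\infty$, and Fatou's lemma applied on the positive-measure set $\{u\ne 0\}$ forces $t^{-4}\int_{\mathbb{R}^N}F(x,tu)\,dx\to+\infty$, so $g(t)\to-\infty$. By continuity $g$ attains a positive global maximum at some interior point $t_u>0$, and $g'(t_u)=0$ delivers $t_uu\in\mathcal{N}$.

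For uniqueness I would not differentiate at all but instead invoke the inequality \eqref{2.5} directly, which is exactly the tool tailored to the present ``Nehari-type'' setting. Suppose $t_1u,\,t_2u\in\mathcal{N}$ with $0<t_1<t_2$. Applying \eqref{2.5} to the element $t_1u\in\mathcal{N}$ with ratio $t=t_2/t_1$ gives
$$J(t_1u)\ge J(t_2u)+(1-\mu)\frac{\big(1-(t_2/t_1)^2\big)^2}{4}\|t_1u\|^2>J(t_2u),$$
while applying \eqref{2.5} to $t_2u\in\mathcal{N}$ with ratio $t=t_1/t_2$ gives $J(t_2u)>J(t_1u)$; here both terms are strictly positive because $\mu\in(0,1)$, $u\ne 0$ and $t_1\ne t_2$. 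These two strict inequalities contradict one another, so $t_1=t_2$, which proves uniqueness.

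The small-$t$ estimate and the uniqueness step are essentially mechanical once \eqref{2.5} is in hand, so the delicate point will be the behaviour as $t\to\infty$: one must pass from hypothesis $(f3)$, stated for $f(x,u)/u^3$, to genuine superquartic growth of the primitive $F$, keep track of the sign of $u$, and make sure the nonnegative quartic Kirchhoff contribution cannot prevent $g$ from diverging to $-\infty$. A clean way to settle this is to observe that $g(t)/t^4\to-\infty$, since the growing $t^{-4}\int_{\mathbb{R}^N}F(x,tu)\,dx$ eventually dominates the fixed coefficient $\tfrac14\big(b|\nabla u|^4_2+2b|\nabla u|^2_2\int_{\mathbb{R}^N}V(x)u^2\,dx\big)$.
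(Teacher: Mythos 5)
Your proposal is correct and takes essentially the same route as the paper: the existence step rests on the identical small-$t$ estimate from $(f1)$--$(f2)$ and large-$t$ blow-up from $(f3)$ applied to the fiber map (the paper applies the intermediate value theorem to $\gamma_1(t)=J'(tu)(tu)$, while you locate an interior maximum of $g(t)=J(tu)$; these coincide since $\gamma_1(t)=t\,g'(t)$), and your uniqueness argument is the paper's verbatim, namely a double application of the inequality \eqref{2.5} (equivalently Lemma \ref{lemma2.1} at two putative roots) yielding contradictory strict inequalities. No gaps.
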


\begin{proof}
	Let $u\in H\setminus \{0\}$. We define 
	\begin{equation}\label{2.7}
	\gamma_1(s)=s^2\|u\|^2+bs^4|\nabla u|_2^4+2bs^4|\nabla u|^2_2\int_{\mathbb{R}^N}V(x)u^2d x-\int_{\mathbb{R}^N}f(x,su)sud x.
	\end{equation}
	Using $(f2)$-$(f3)$, we can see that $	\gamma_1(0)=0$, $	\gamma_1(t)<0$ for $t>0$ large and $	\gamma_1(t)>0$ for $t>0$ small. Since $\gamma_1$ is continuous, there exists $t_u>0$ such that $	\gamma_1(t_u)=0$. We know that $t_u$ is the unique root of $\gamma_1(t)$. Indeed, if there exist another $\tilde{t}_u>0$ root, then
	\[
	\gamma_1(t_u)=\gamma_1(\tilde{t}_u)=0,
	\]
	and so, by \eqref{d2.2}, 
	\[
	J'(t_u u)t_u u=J'(\tilde{t}_u u)\tilde{t}_u u=0
   \]
   which together with \eqref{2.3} implies 
   \begin{align*}\label{2.10}
   J(t_u u)&\geq J(\tilde{t}_u u)+\frac{1-(\tilde{t}_u/t_u)^4}{4}J'(t_u u)t_u u+\frac{(1-\mu)(1-(\tilde{t}_u/t_u)^2)^2}{4}\|t_u u\|^2\\
   &=J(\tilde{t}_u u)+\frac{(1-\mu)(1-(\tilde{t}_u/t_u)^2)^2}{4}\|t_u u\|^2
   \end{align*}
   and
    \begin{align*}
   J(\tilde{t}_u u)&\geq J({t}_u u)+\frac{1-({t}_u/\tilde{t}_u)^4}{4}J^\prime(\tilde{t}_u u)\tilde{t}_u u+\frac{(1-\mu)(1-({t}_u/\tilde{t}_u)^2)^2}{4}\|\tilde{t}_u u\|^2\\
   &=J({t}_u u)+\frac{(1-\mu)(1-({t}_u/\tilde{t}_u)^2)^2}{4}\|\tilde{t}_u u\|^2.
   \end{align*}
   Then, comparing the above expressions, we have
   \[t_u=\tilde{t}_u.\]
   So, there exists unique $t_u$ such that $\gamma_1(t_u)=0$, for any $u\in H\setminus\{0\}$, namely, $t_u u\in \mathcal{N}$.
\end{proof}

\begin{lemma}\label{lemma2.5}
	Assume that $(f1)$-$(f4)$ are satisfied. Then
	\[
	\inf_{u\in \mathcal{N}}J(u)=c_{\mathcal{N}}=\inf_{u\in H\setminus\{0\}}\max_{t\geq 0}J(tu).
	\]
\end{lemma}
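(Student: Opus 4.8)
The plan is to establish the two inequalities separately, leveraging the unique fiber-map maximizer provided by Lemma~\ref{lemma2.4} together with the characterization in Corollary~\ref{2.2}. Throughout, set $c_{\mathcal{N}}=\inf_{u\in\mathcal{N}}J(u)$ and note that $\mathcal{N}\subset H\setminus\{0\}$.

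First I would prove the inequality $c_{\mathcal{N}}\le \inf_{u\in H\setminus\{0\}}\max_{t\ge 0}J(tu)$. Given any $u\in H\setminus\{0\}$, Lemma~\ref{lemma2.4} furnishes a unique $t_u>0$ with $t_u u\in\mathcal{N}$. The one point needing a short argument is that $\max_{t\ge 0}J(tu)=J(t_u u)$. To see this, consider the fiber map $g(s):=J(su)$ for $s\ge 0$; its derivative is $g'(s)=J'(su)u=\tfrac{1}{s}J'(su)(su)=\tfrac{1}{s}\gamma_1(s)$ for $s>0$, where $\gamma_1$ is the function defined in \eqref{2.7}. From the sign analysis in the proof of Lemma~\ref{lemma2.4}, $\gamma_1>0$ for $s$ small, $\gamma_1<0$ for $s$ large, and $t_u$ is its unique positive zero; hence $g$ increases then decreases and attains its maximum exactly at $s=t_u$. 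Thus $\max_{t\ge 0}J(tu)=J(t_u u)\ge c_{\mathcal{N}}$, and taking the infimum over $u\in H\setminus\{0\}$ gives the desired bound.

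For the reverse inequality $c_{\mathcal{N}}\ge \inf_{u\in H\setminus\{0\}}\max_{t\ge 0}J(tu)$, I would invoke Corollary~\ref{2.2} directly. Every $u\in\mathcal{N}$ lies in $H\setminus\{0\}$ and satisfies $J(u)=\max_{t\ge 0}J(tu)\ge \inf_{v\in H\setminus\{0\}}\max_{t\ge 0}J(tv)$; taking the infimum over $u\in\mathcal{N}$ yields the inequality. Combining the two directions gives the stated equality.

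I expect no serious obstacle here, since both inequalities reduce to the already-established Lemma~\ref{lemma2.4} and Corollary~\ref{2.2}. The only mildly delicate step is the first one: confirming that the maximum of the fiber map is attained \emph{precisely} at the Nehari point $t_u u$, rather than merely being bounded below by $J(t_u u)$. This is handled by the monotonicity of $g$ recorded above, which in turn rests on the sign behaviour of $\gamma_1$ already extracted in the proof of Lemma~\ref{lemma2.4}.
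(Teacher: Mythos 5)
Your proposal is correct and takes essentially the same route as the paper: Corollary~\ref{2.2} yields $c_{\mathcal{N}}=\inf_{u\in\mathcal{N}}\max_{t\geq 0}J(tu)\geq\inf_{u\in H\setminus\{0\}}\max_{t\geq 0}J(tu)$, and Lemma~\ref{lemma2.4} yields the reverse inequality. The only difference is that your monotonicity analysis of the fiber map is superfluous for this direction: the paper needs only the trivial bound $J(t_u u)\leq\max_{t\geq 0}J(tu)$ (valid simply because $t=t_u$ is admissible in the maximum), together with $c_{\mathcal{N}}\leq J(t_u u)$ from $t_u u\in\mathcal{N}$, so the exact location of the maximizer never has to be identified.
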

\begin{proof}
	Firstly, from \eqref{2.6}, we obtain
	\[
	c_{\mathcal{N}}=	\inf_{u\in \mathcal{N}}J(u)=	\inf_{u\in \mathcal{N}}\max_{t\geq 0}J(tu)\geq \inf_{u\in H\setminus\{0\}}\max_{t\geq 0}J(tu).
	\]
	Finally, for $u\in H\setminus\{0\} $, it follows from Lemma \ref{lemma2.4} that 
\begin{align}\label{2.12}
	c_{\mathcal{N}}=\inf_{z\in \mathcal{N}}J(z)\leq  J(t_uu)\leq \max_{t\geq 0}J(t u),
\end{align}
and so,
\begin{align}\label{2.13}
	c_{\mathcal{N}}=\inf_{z\in \mathcal{N}}J(z)\leq \inf_{u\in H\setminus\{0\}}\max_{t\geq 0}J(t_uu).
\end{align}
\end{proof}

\begin{lemma}\label{lemma2.7}
Assume that $(f1)$-$(f4)$ are satisfied. Then
	\[
		c_{\mathcal{N}}>0.
	\]
\end{lemma}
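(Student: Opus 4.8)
The plan is to produce a uniform positive lower bound for $\|u\|$ over all $u \in \mathcal{N}$ and then to convert it into a positive lower bound for the energy by means of the already established inequality \eqref{2.5}.

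First I would fix an arbitrary $u \in \mathcal{N}$ and use the constraint $J'(u)u = 0$. Reading $J'(u)u$ off from \eqref{d2.2} and recalling that $b \geq 0$ and, by $(V)$, $V > 0$, all the terms carrying the factor $b$ are nonnegative; discarding them gives the one-sided estimate
\[
\|u\|^2 \leq \int_{\mathbb{R}^N} f(x,u)u \, dx.
\]
Next, combining the subcritical growth $(f1)$ with the behaviour $(f2)$ near the origin yields, for every $\varepsilon > 0$, a constant $C_\varepsilon > 0$ such that $|f(x,u)u| \leq \varepsilon u^2 + C_\varepsilon |u|^p$ for all $(x,u) \in \mathbb{R}^N \times \mathbb{R}$. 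Integrating and invoking the Sobolev embeddings $H^2(\mathbb{R}^N) \hookrightarrow L^2(\mathbb{R}^N)$ and $H^2(\mathbb{R}^N) \hookrightarrow L^p(\mathbb{R}^N)$, legitimate since $p \in (4, 2_*)$, produces constants $C_1, C_2 > 0$ with $\int_{\mathbb{R}^N} f(x,u)u \, dx \leq \varepsilon C_1 \|u\|^2 + C_2 \|u\|^p$.

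Feeding this back into the previous display and choosing $\varepsilon$ so small that $\varepsilon C_1 \leq 1/2$ leaves $\tfrac{1}{2}\|u\|^2 \leq C_2 \|u\|^p$. Since $u \neq 0$ and $p > 2$, this forces $\|u\| \geq \rho$ for some $\rho > 0$ independent of $u$. To conclude, I would set $t = 0$ in \eqref{2.5} and use $J(0) = 0$ to obtain $J(u) \geq \frac{1-\mu}{4}\|u\|^2 \geq \frac{1-\mu}{4}\rho^2$ for every $u \in \mathcal{N}$; passing to the infimum and recalling $\mu \in (0,1)$ gives $c_{\mathcal{N}} \geq \frac{1-\mu}{4}\rho^2 > 0$.

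The argument is essentially routine Nehari-manifold bookkeeping, and the only point deserving care is the very first step: discarding the Kirchhoff contributions is justified precisely because $b \geq 0$ together with $\inf_{\mathbb{R}^N} V > 0$ makes $\int_{\mathbb{R}^N} V(x)u^2 \, dx \geq 0$. The Sobolev embedding then supplies the crucial superquadratic exponent $p > 2$, without which the estimate $\tfrac{1}{2}\|u\|^2 \leq C_2\|u\|^p$ could not be solved for a positive lower bound on $\|u\|$.
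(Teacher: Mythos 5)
Your proof is correct and follows essentially the same path as the paper: both discard the nonnegative $b$-terms in $J'(u)u=0$, combine $(f1)$--$(f2)$ with the Sobolev embedding to force $\|u\|\geq\rho>0$ on $\mathcal{N}$, and then deduce $J(u)\geq\frac{1-\mu}{4}\|u\|^2$. The only cosmetic difference is that you obtain this last bound by taking $t=0$ in \eqref{2.5}, while the paper writes $J(u)=J(u)-\frac{1}{4}J'(u)u$ and invokes \eqref{2.14}; the two inequalities stem from the same source (inequality \eqref{2.4}) and give the identical conclusion.
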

\begin{proof}
	If $u\in \mathcal{N}$, then $J'(u)u=0$ and by $(f1),$ $(f2)$ and Sobolev embedding theorem, one has 
	\begin{align*}
	\|u\|^2\leq \|u\|^2+b|\nabla u|^4_2+2b|\nabla u|^2_2\int_{\mathbb{R}^N}V(x)u^2d x&=\int_{\mathbb{R}^N}f(x,u)ud x\\
	&\leq \frac{1}{2}\int_{\mathbb{R}^{N}}\inf_{\mathbb{R}^N}V(x)u^2 d x+c\|u\|^p_p\\
	&\leq \frac{1}{2}\|u\|^2+c\|u\|^p
	\end{align*}
	and so,
	\[
	\|u\|\geq \hat{c}>0	
	\]
	for some $\hat{c}>0$.
	By \eqref{2.14}, we get
	\[
	J(u)=J(u)-\frac{1}{4}J'(u)u=\frac{1-\mu}{4}C>0.
	\]
	This implies $	c_{\mathcal{N}}\geq \frac{1-\mu}{4}C>0$.
\end{proof}

\begin{lemma}\label{lemma2.8}
	Assume that $(f1)$-$(f4)$ are satisfied.Then there exist some constant $d\in (0,c_{\mathcal{N}}] $ and a sequence $\{u_n\}\subset H$ such that
	\begin{equation}\label{2.16}
	J(u_n)\to d,\quad \|J'(u_n)\|(1+\|u_n\|)\to 0.
	\end{equation}
\end{lemma}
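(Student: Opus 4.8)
The plan is to realize $d$ as a mountain pass level for $J$ and to obtain the sequence \eqref{2.16} from the version of the mountain pass theorem that yields \emph{Cerami} (rather than merely Palais--Smale) sequences. The weight $(1+\|u_n\|)$ multiplying $\|J'(u_n)\|$ is exactly the Cerami feature, and it is what will later allow one to recover compactness even though $(f3)$ is only a weak superquadraticity and $(f4)$ replaces the usual Ambrosetti--Rabinowitz condition.

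First I would verify the mountain pass geometry of $J$. Since $b\geq 0$ and $V>0$, the three polynomial terms in \eqref{funcj} are nonnegative, so $J(u)\geq \tfrac12\|u\|^2-\int_{\mathbb{R}^N}F(x,u)\,dx$. Combining $(f1)$ and $(f2)$ gives, for every $\varepsilon>0$, a constant $C_\varepsilon$ with $|F(x,u)|\leq \varepsilon|u|^2+C_\varepsilon|u|^p$; together with $\int_{\mathbb{R}^N}u^2\,dx\leq (a\inf_{\mathbb{R}^N}V)^{-1}\|u\|^2$ and the Sobolev embedding $H\hookrightarrow L^p$ (recall $p\in(4,2_*)$) this yields $J(u)\geq \tfrac14\|u\|^2-C\|u\|^p$ after fixing $\varepsilon$ small. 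As $p>4>2$, there are $\rho,\alpha>0$ with $J\geq 0$ on $\overline{B_\rho}$ and $J(u)\geq\alpha$ on $\partial B_\rho$, while $J(0)=0$. For the far endpoint, fix any $w\in H\setminus\{0\}$; using $(f3)$ and Fatou's lemma one checks $J(tw)/t^4\to-\infty$ as $t\to\infty$, so $J(tw)\to-\infty$ and there exists $e=t_0w$ with $\|e\|>\rho$ and $J(e)<0$.

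With the geometry in hand I would set
\[
\Gamma=\{\gamma\in C([0,1],H):\gamma(0)=0,\ J(\gamma(1))<0\},\qquad
d:=\inf_{\gamma\in\Gamma}\max_{s\in[0,1]}J(\gamma(s)).
\]
Since $J\geq 0$ on $\overline{B_\rho}$, every admissible path starts inside $B_\rho$ and ends outside it, hence crosses $\partial B_\rho$, giving $d\geq\alpha>0$. Applying the mountain pass theorem in its Cerami form (the deformation-lemma argument producing sequences weighted by $(1+\|u_n\|)$) then furnishes $\{u_n\}\subset H$ with $J(u_n)\to d$ and $(1+\|u_n\|)\|J'(u_n)\|\to0$, which is precisely \eqref{2.16}.

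It remains to place $d$ inside $(0,c_{\mathcal{N}}]$, and this is where the earlier lemmas do the work. For any $u\in H\setminus\{0\}$ the ray $s\mapsto su$, restricted to $[0,T]$ with $T$ large enough that $J(Tu)<0$ (possible since $J(su)\to-\infty$) and then reparametrised on $[0,1]$, is an admissible path, so $d\leq\max_{s\geq 0}J(su)$. Taking the infimum over $u$ and invoking Lemma \ref{lemma2.5} (with Corollary \ref{2.2} identifying $\max_{s\geq0}J(su)$ with $J$ at the Nehari point) gives $d\leq\inf_{u\in H\setminus\{0\}}\max_{s\geq0}J(su)=c_{\mathcal{N}}$. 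Hence $d\in(0,c_{\mathcal{N}}]$ with $d$ the mountain pass level. The one genuinely delicate point is the construction of the Cerami sequence: one must invoke the minimax principle in the form carrying the factor $(1+\|u_n\|)$, because under $(f3)$ and $(f4)$ ordinary Palais--Smale sequences need not be bounded — their boundedness will instead be extracted afterwards from the Cerami property together with the convexity-type estimate \eqref{2.3} of Lemma \ref{lemma2.1}.
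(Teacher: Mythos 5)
Your proof is correct, and it reorganizes the paper's argument rather than reproducing it. The paper never forms a single minimax level: it takes near-minimizers $w_k\in\mathcal{N}$ with $c_{\mathcal{N}}\le J(w_k)<c_{\mathcal{N}}+\frac{1}{k}$ (its \eqref{2.18}), runs the mountain-pass/deformation argument once for \emph{every} $k$ along the ray through $w_k$, so that each level $d_k$ lies in $[\eta,\sup_{t\ge 0}J(tw_k)]$, caps $d_k<c_{\mathcal{N}}+\frac{1}{k}$ by the identity $J(w_k)=\sup_{t\ge 0}J(tw_k)$ coming from \eqref{2.5}--\eqref{2.6}, and then performs the diagonal extraction \eqref{2.21.0}; its $d$ exists only as a subsequential limit of the diagonal energies. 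You instead apply the Cerami form of the mountain pass theorem exactly once, over the free-endpoint class $\Gamma=\{\gamma\in C([0,1],H):\gamma(0)=0,\ J(\gamma(1))<0\}$, and bound the resulting level by $c_{\mathcal{N}}$ via Lemma \ref{lemma2.5}, which packages the same mechanism (Lemma \ref{lemma2.1}/Corollary \ref{2.2}) that the paper uses to cap each $d_k$. What your route buys: $d$ is an honest minimax value, and the $k$-indexed bookkeeping and diagonal subsequence of \eqref{2.19}--\eqref{2.21.0} disappear. What it costs: you must invoke the mountain pass theorem for the free-endpoint path class (paths ending anywhere in $\{J<0\}$), since with a fixed endpoint $e$ the ray through an arbitrary $u\ne e$ is not admissible and the bound $d\le\inf_{u\ne 0}\max_{t\ge 0}J(tu)$ would not follow; the paper only needs the fixed-endpoint statement applied separately to each $w_k$. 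Both proofs leave the same standard verifications implicit (the geometry near the origin, $J(tu)\to-\infty$ along rays, and the production of Cerami-type sequences by the deformation machinery), so neither is more self-contained on those points.

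One side remark: your closing justification for insisting on Cerami sequences --- that Palais--Smale sequences ``need not be bounded'' under $(f3)$--$(f4)$ --- is overstated in this setting. The computation of Lemma \ref{lemma1.10} works for a Palais--Smale sequence as well: from $|J'(u_n)u_n|\le\|J'(u_n)\|\,\|u_n\|=o(1)\|u_n\|$ one gets $\frac{1-\mu}{4}\|u_n\|^2\le d+o(1)+o(1)\|u_n\|$, which already forces boundedness. The Cerami weight is required simply because \eqref{2.16} asserts it, not because boundedness would otherwise be out of reach.
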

\begin{proof}
	By $(f1),$ $(f2)$ and \eqref{funcj}, for $u\in H$ we have that there exist $\rho >0$ and $\eta>0$ such that letting $\|u\|=\rho$ be small enough, we get $J(u)\geq\eta$. Let $w_k\in \mathcal{N}$ such that, for each $k\in \mathbb{N}$, we have
	\begin{equation}\label{2.18}
c_{\mathcal{N}}+\frac{1}{k}> J(w_k)\geq c_{\mathcal{N}}.
	\end{equation}
	By $J(tw_k)<0$ for large $t>0$ and \eqref{2.18}, we can use Moutain pass Lemma to verify that there exist a sequence $\{u_{k,n}\}\subset H$ such that
		\begin{equation}\label{2.19}
	J(u_{k,n})\to d_k,\quad \|J'(u_{k,n})\|(1+\|u_{k,n}\|)\to 0,
	\end{equation}
	where $d_k\in [\eta, \sup_{t\geq 0}J(t w_k)]$. From \eqref{2.5}, one has 
	\begin{equation*}
	J(w_k)\geq J(tw_k),\quad t\geq 0,
	\end{equation*}
	and so,
	\[
	J(w_k)=\sup_{t\geq 0}J(tw_k).
	\]
	Thus, by \eqref{2.18} and \eqref{2.19}, one has
		\begin{equation}\label{2.20}
	J(u_{k,n})\to d_k<c_{\mathcal{N}}+\frac{1}{k},\quad \|J'(u_{k,n})\|(1+\|u_{k,n}\|)\to 0.
	\end{equation}
	From \eqref{2.20}, if $k=1$ we get $n_1>0$ such that 
	\begin{equation*}
	    	J(u_{1,n_1})\to d_1<c_{\mathcal{N}}+{1},\quad \|J'(u_{1,n_1})\|(1+\|u_{1,n_1}\|)<1;
	\end{equation*}
	if $k=2$ there exist $n_2>n_1>0$ such that 
		\begin{equation*}
	    	J(u_{2,n_2})\to d_2<c_{\mathcal{N}}+
	    	\frac{1}{2},\quad \|J'(u_{2,n_2})\|(1+\|u_{2,n_2}\|)<\frac{1}{2}.
	\end{equation*}
Actually, we can get a sequence $n_k\rightarrow \infty$ as $k\rightarrow \infty$ and there exist a sequence $\{u_{k,n_k}\}\subset H$ satisfying
		\begin{equation}\label{2.21.0}
	J(u_{k,n_k})< c_{\mathcal{N}}+\frac{1}{k},\quad \|J'(u_{k,n_k})\|(1+\|u_{k,n_k}\|)<\frac{1}{k}.
	\end{equation}
	Therefore, going if necessary to a subsequence, by virtue of \eqref{2.21.0}, one has
	\[
	J(u_{n})\to d\in [\eta, c_{\mathcal{N}}],\quad \|J^\prime(u_n)\|(1+\|u_{n}\|)\to 0.
	\]
\end{proof}

\begin{lemma}\label{lemma1.10}
    The sequence $\{u_n\}$ is bounded in $H.$
\end{lemma}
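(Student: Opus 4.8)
The plan is to exploit the standard combination $J(u_n)-\tfrac14 J'(u_n)u_n$, for which the quartic scaling of the problem makes the nonlocal Kirchhoff terms cancel. First I would record that the conclusion \eqref{2.16} of Lemma~\ref{lemma2.8} gives $J(u_n)\to d$ together with $|J'(u_n)u_n|\leq \|J'(u_n)\|(1+\|u_n\|)\to 0$, so that $J'(u_n)u_n=o(1)$.

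Next, using \eqref{funcj} and \eqref{d2.2} I would compute directly that
\begin{equation*}
J(u)-\frac14 J'(u)u=\frac14\|u\|^2+\int_{\mathbb{R}^N}\Big(\frac14 f(x,u)u-F(x,u)\Big)d x,
\end{equation*}
the point being that the terms $\frac{b}{4}|\nabla u|^4_2$ and $\frac{b}{2}|\nabla u|^2_2\int_{\mathbb{R}^N}V(x)u^2d x$ coming from $J$ are exactly cancelled by $\frac14$ of the corresponding quartic contributions $b|\nabla u|^4_2$ and $2b|\nabla u|^2_2\int_{\mathbb{R}^N}V(x)u^2d x$ appearing in $J'(u)u$. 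This is the only place where the specific homogeneity of the nonlocal part is used, and it is what makes the value $b\geq 0$ irrelevant to the bound.

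I would then control the integral from below using inequality \eqref{2.14}, which gives $\frac14 f(x,u)u-F(x,u)\geq -\frac{\mu a V(x)}{4}u^2$ pointwise. Since $a\int_{\mathbb{R}^N}V(x)u^2d x\leq \|u\|^2$ by the definition of the norm, this yields
\begin{equation*}
J(u_n)-\frac14 J'(u_n)u_n\geq \frac14\|u_n\|^2-\frac{\mu}{4}\|u_n\|^2=\frac{1-\mu}{4}\|u_n\|^2.
\end{equation*}
Combining with the first step, $\frac{1-\mu}{4}\|u_n\|^2\leq d+o(1)$, and because $\mu\in(0,1)$ the coefficient $1-\mu$ is strictly positive, so $\|u_n\|$ is bounded.

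The computation is essentially routine; the only point requiring care is the cancellation of the Kirchhoff terms, which forces the choice of the weight $\tfrac14$ (matching the quartic growth) rather than $\tfrac12$, and is precisely why assumption $(f4)$ is phrased through $f/u^3$ with the term $\mu a V$: inequality \eqref{2.14} then produces exactly the $-\frac{\mu a V}{4}u^2$ contribution that can be absorbed into $\|u_n\|^2$ even though $f(x,u)u-4F(x,u)$ is allowed to change sign. No compactness is invoked at this stage—boundedness is a pure coercivity estimate for $J-\tfrac14 J'$ on all of $H$.
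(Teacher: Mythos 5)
Your proposal is correct and follows exactly the paper's own argument: both evaluate $J(u_n)-\tfrac14 J'(u_n)u_n$, use the exact cancellation of the Kirchhoff terms, bound $\tfrac14 f(x,u_n)u_n-F(x,u_n)\geq -\tfrac{\mu a V(x)}{4}u_n^2$ via \eqref{2.14}, and absorb $a\int_{\mathbb{R}^N}V(x)u_n^2\,dx\leq\|u_n\|^2$ to obtain $\tfrac{1-\mu}{4}\|u_n\|^2\leq d+o_n(1)$. Your write-up is in fact slightly more careful than the paper's, since you make explicit both the cancellation computation and the step $|J'(u_n)u_n|\leq\|J'(u_n)\|(1+\|u_n\|)\to 0$.
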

\begin{proof}
By \eqref{2.16}, we have
\begin{align*}
    d+o_{n}(1)&=J(u_n)-\frac{1}{4} J^{\prime}\left(u_{n}\right) u_{n}\\
    &\geq\left(\frac{1}{2}-\frac{1}{4}\right)\|u_n\|^2-\int_{\mathbb{R}^{N}}\frac{\mu a}{4}V(x)u_n^2d x\\
    &\geq \frac{1}{4}\|u_n\|^2-\frac{\mu}{4}\|u_n\|^2\\
    &=\frac{(1-\mu)}{4}\|u_n\|^2.
\end{align*}
This shows that $\{u_n\}$ is bounded.
\end{proof}

\begin{lemma}\label{2.9}
	Assume that $(f1)$-$(f4)$ are satisfied. Since $\{u_n\}$ is bounded in $H$, then there exists $\tilde{u}\in H$ such that $J^\prime(\tilde{u})=0$. Moreover, if $\tilde{u}\neq 0$, going if necessary to a subsequence, then 
	\[\int_{\mathbb{R}^{N}}\left|\nabla u_{n}\right|^{2} d x \rightarrow \int_{\mathbb{R}^{N}}\left|\nabla\tilde{u}\right|^{2} d x,\quad \mbox{as}\quad n \rightarrow \infty\]
	and
	\[\int_{\mathbb{R}^{N}}\left|\nabla u_{n}\right|^{2} +2V(x)u_n^2 d x \rightarrow \int_{\mathbb{R}^{N}}\left|\nabla\tilde{u}\right|^{2}+ 2V(x)\tilde{u}^2d x,\quad \mbox{as}\quad n \rightarrow \infty.\]
	
\end{lemma}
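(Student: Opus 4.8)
The plan is to extract a weak limit from the bounded sequence and then promote it to a genuine critical point of $J$; the only real difficulty is that the non-local Kirchhoff coefficient prevents the weak limit from solving \eqref{p} directly. Since $\{u_n\}$ is bounded (Lemma \ref{lemma1.10}), along a subsequence the scalars $\alpha:=\lim_n|\nabla u_n|_2^2$ and $\beta:=\lim_n\int_{\mathbb{R}^N}V(x)u_n^2\,dx$ exist.

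\emph{Weak limit and a frozen equation.} First I would pass to a further subsequence so that $u_n\rightharpoonup\tilde u$ in $H$, $u_n\to\tilde u$ in $L^s_{\mathrm{loc}}(\mathbb{R}^N)$ for $s\in[2,2_*)$ and a.e.\ in $\mathbb{R}^N$. Letting $n\to\infty$ in \eqref{d2.2} tested against any $\varphi\in C_c^\infty(\mathbb{R}^N)$, weak convergence handles the inner-product term $(u_n,\varphi)$ and the Kirchhoff gradient term, while local strong convergence together with $(f1)$ handles $\int V(x)u_n\varphi$ and $\int f(x,u_n)\varphi$; hence $\tilde u$ solves the \emph{frozen} equation obtained from \eqref{d2.2} by replacing $|\nabla u|_2^2$ by $\alpha$ and $\int_{\mathbb{R}^N}Vu^2$ by $\beta$ in the Kirchhoff coefficients. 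By weak lower semicontinuity $\alpha\geq P:=|\nabla\tilde u|_2^2$ and $\beta\geq Q:=\int_{\mathbb{R}^N}V(x)\tilde u^2\,dx$, and the two asserted limits are exactly the equalities $\alpha=P$, $\beta=Q$; when these hold the frozen equation is literally $J'(\tilde u)=0$. If $\tilde u=0$ the statement $J'(\tilde u)=0$ is trivial and nothing further is claimed, so I assume $\tilde u\neq0$ and $b>0$ (the degenerate case $b=0$ being handled by the classical additive energy splitting).

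\emph{A one-sided identity.} Testing the frozen equation against $\tilde u$ and subtracting $J'(\tilde u)\tilde u$ computed from \eqref{d2.2}, the term $\int f(x,\tilde u)\tilde u$ cancels and a short computation yields
\[
J'(\tilde u)\tilde u=-b\big[(\alpha-P)(P+Q)+(\beta-Q)P\big]\leq 0 .
\]
Because $\tilde u\neq0$ forces $P>0$ and $Q>0$ (recall $V>0$), as soon as $J'(\tilde u)\tilde u=0$ the bracket vanishes and each non-negative summand must be zero, i.e.\ $\alpha=P$ and $\beta=Q$. So the whole lemma reduces to upgrading $J'(\tilde u)\tilde u\le0$ to an equality, and this is the heart of the matter: the strict possibility $J'(\tilde u)\tilde u<0$ is created entirely by the Kirchhoff mass $\alpha-P$, $\beta-Q$ lost into the weak limit.

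\emph{Closing the gap by an energy sandwich.} The hard step is to exclude $J'(\tilde u)\tilde u<0$, which I would do with the Nehari/mountain-pass level. In $J(u_n)-\tfrac14J'(u_n)u_n$ the Kirchhoff terms cancel exactly, leaving $\tfrac14\|u_n\|^2+\int(\tfrac14 f(x,u_n)u_n-F(x,u_n))$; rewriting the integrand through the non-negative quantity of \eqref{2.14}, Fatou's lemma and weak lower semicontinuity give $d\geq\Xi(\tilde u)$, where $\Xi(\tilde u):=\tfrac14\|\tilde u\|^2+\int_{\mathbb{R}^N}(\tfrac14 f(x,\tilde u)\tilde u-F(x,\tilde u))\,dx$. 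Conversely, taking the Nehari projection $t_{\tilde u}>0$ of $\tilde u$ (Lemma \ref{lemma2.4}), applying \eqref{2.3} with $u=\tilde u$ at $t=t_{\tilde u}$, and using $J'(\tilde u)\tilde u\le0$ with $t_{\tilde u}\tilde u\in\mathcal N$,
\[
\Xi(\tilde u)\geq J(t_{\tilde u}\tilde u)-\frac{t_{\tilde u}^{4}}{4}J'(\tilde u)\tilde u+(1-\mu)\frac{(1-t_{\tilde u}^{2})^{2}}{4}\|\tilde u\|^2\geq J(t_{\tilde u}\tilde u)\geq c_{\mathcal N}.
\]
Since $d\leq c_{\mathcal N}$ by Lemma \ref{lemma2.8}, the chain $c_{\mathcal N}\geq d\geq\Xi(\tilde u)\geq c_{\mathcal N}$ forces equality throughout; as $\tilde u\neq0$ and $\mu<1$ the two non-negative correction terms must vanish, giving $t_{\tilde u}=1$ and $J'(\tilde u)\tilde u=0$. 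By the one-sided identity this yields $\alpha=P$ and $\beta=Q$, which are precisely the two displayed convergences and at the same time turn the frozen equation into $J'(\tilde u)=0$.
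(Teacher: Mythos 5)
Your proof is correct and is essentially the paper's own argument: both pass to a weak limit, derive the limit (``frozen'') equation in which the Kirchhoff scalars are replaced by their limits $\alpha,\beta$, observe that weak lower semicontinuity gives $J'(\tilde u)\tilde u\le 0$, and then combine the Nehari projection (Lemma \ref{lemma2.4}) with \eqref{2.3}, \eqref{2.14}, Fatou's lemma and $d\le c_{\mathcal N}$ to force $J'(\tilde u)\tilde u=0$ and hence $\alpha=|\nabla\tilde u|_2^2$ and $\beta=\int_{\mathbb{R}^N}V(x)\tilde u^2\,dx$. The only differences are presentational: the paper runs the final step as a contradiction (strict deficit would give $c_{\mathcal N}>c_{\mathcal N}$) while you run it as an equality sandwich, and your explicit identity $J'(\tilde u)\tilde u=-b\bigl[(\alpha-P)(P+Q)+(\beta-Q)P\bigr]$ spells out the step the paper merely asserts (note that both versions of this step genuinely use $b>0$, a restriction you at least flag).
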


\begin{proof}
Since $\{u_n\}$ is bounded in $H$ and $H$ is a reflexive Banach space, there exists $\tilde{u}\in H$ such that
	\begin{equation}\label{2.22.0}
	\left\{\begin{array}{l}{u_{n} \rightharpoonup \tilde{u} \text { in } H^{2}\left(\mathbb{R}^{N}\right)} \\ {u_{n} \rightarrow\tilde{u} \text { in } L_{loc}^{q}\left(\mathbb{R}^{N}\right)\left(2 \leq q<2^{*}\right)},~2^*=2N/(N-2) \\ {u_{n}(x) \rightarrow\tilde{u}(x) \text { a.e. on } \mathbb{R}^{N} \text { . }}\end{array}\right.
	\end{equation}
	If $\tilde{u}=0$, then $J^\prime(\tilde{u})\tilde{u}=0$. Now, if $\tilde{u}\neq 0$, up to a subsequence, there are $C_1>0$ and $C_2>0$ such that
	\[\int_{\mathbb{R}^{N}}\left|\nabla u_{n}\right|^{2} d x \rightarrow C_1^2,\quad \mbox{as}\quad n \rightarrow \infty\]
	and
	\[\int_{\mathbb{R}^{N}}\left|\nabla u_{n}\right|^{2} +2V(x)u_n^2 d x \rightarrow C_2^2,\quad \mbox{as}\quad n \rightarrow \infty.\]
Since $u_{n} \rightharpoonup\tilde{u}$ in $H$, by Lemma 2 in \cite{20imb}, we get 
	\[\int_{\mathbb{R}^{N}}\left|\nabla\tilde{u}\right|^{2} d x \leq \liminf_{n}\int_{\mathbb{R}^{N}}\left|\nabla u_{n}\right|^{2} d x =C_1^2 ,\quad \mbox{as}\quad n \rightarrow \infty\]
	and
	\[   \int_{\mathbb{R}^{N}}\left|\nabla\tilde{u}\right|^{2} +2V(x)\tilde{u}^2 d x  \leq \liminf_{n} \int_{\mathbb{R}^{N}}\left|\nabla u_{n}\right|^{2} +2V(x)u_n^2 d x= C_2^2,\quad \mbox{as}\quad n \rightarrow \infty.\]
	
	We argue by contradiction. Suppose that 
	\begin{equation}\label{pri}
	\int_{\mathbb{R}^{N}}\left|\nabla \tilde{u}\right|^{2} d x<C_1^2
	\end{equation}
	and
	\begin{equation}\label{segun}
	 \int_{\mathbb{R}^{N}}\left|\nabla\tilde{u}\right|^{2} +2V(x)\tilde{u}^2 d x < C_2^2.
    \end{equation}
	Let $\psi \in C^\infty_0(\mathbb{R}^N)$, by \eqref{2.16}, we get
	\begin{align}\label{2.23.0}
	    \lim_{n}J^\prime(u_n)\psi&=(\tilde{u},\psi)+b\left(C_2^2\right)\int_{\mathbb{R}^N}\nabla \tilde{u} \nabla \psi d x  +b(C_1^2)\int_{\mathbb{R}^N}V(x)\tilde{u}\psi d x \nonumber\\
	    &-\int_{\mathbb{R}^N}f(x,\tilde{u})\psi d x=0.
	\end{align}
	By approximation \eqref{2.23.0} is satisfied for all $\psi \in H$. Thus, 
	\begin{align}\label{2.24}
	   \|\tilde{u}\|^2+b\left(C_2^2\right)\int_{\mathbb{R}^N}|\nabla \tilde{u}|^2 d x  +b(C_1^2)\int_{\mathbb{R}^N}V(x)\tilde{u}^2 d x -\int_{\mathbb{R}^N}f(x,\tilde{u})\tilde{u} d x=0.
	\end{align}
	Then, if \eqref{pri} or \eqref{segun} occur, we get $J^\prime (\tilde{u})\tilde{u}<0$. From Lemma \ref{lemma2.4}, there exists $\tilde{t}>0$ such that $\tilde{t} \tilde{u}\in \mathcal{N}$. Therefore, $J(\tilde{t} \tilde{u})\geq c_{\mathcal{N}}$ and so, by Fatou's lemma and \eqref{2.3}, we have
	\begin{align*}
	\begin{aligned} c_{\mathcal{N}} & \geq d=\lim _{n \rightarrow \infty}\left(J\left(u_{n}\right)-\frac{1}{4} J^{\prime}\left(u_{n}\right) u_{n}\right) \\
	=& \lim _{n \rightarrow \infty}\left[\frac{1}{4}\left\|u_{n}\right\|^{2}+\int_{\mathbb{R}^{N}}\left(\frac{1}{4} f\left(x, u_{n}\right) u_{n}-F\left(x, u_{n}\right)\right) d x \right]\\
	 \geq & \frac{1}{4} \liminf _{n \rightarrow \infty}\left(\left\|u_{n}\right\|^{2}-\mu \int_{\mathbb{R}^{N}}a V(x) u_{n}^{2} d x\right) \\
	 &\quad+\liminf _{n \rightarrow \infty} \int_{\mathbb{R}^{N}}\left(\frac{1}{4} f\left(x, u_{n}\right) u_{n}-F\left(x, u_{n}\right)+\frac{\mu aV(x)}{4}u_{n}^{2}\right) {d}x\\
    \geq& \frac{1}{4}\left(\left\|\tilde{u}\right\|^{2}-\mu \int_{\mathbb{R}^{N}}a V(x)\tilde{u}^{2} d x\right)+\int_{\mathbb{R}^{N}}\left[\frac{1}{4} f\left(x,\tilde{u}\right)\tilde{u}-F\left(x,\tilde{u}\right)+\frac{\mu aV(x)}{4}\tilde{u}^{2}\right] d x\\
    =&\left(J\left(\tilde{u}\right)-\frac{1}{4} J^{\prime}\left(\tilde{u}\right)\tilde{u}\right)\\
    \geq &\left(J\left(\tilde{t}\tilde{u}\right)+\frac{1-{\tilde{t}}^4}{4}J^\prime(\tilde{u})\tilde{u}+(1-\mu)\frac{(1-{\tilde{t}}^2)^2}{4}\|\tilde{u}\|^2\right)-\frac{1}{4} J^{\prime}\left(\tilde{u}\right)\tilde{u}\\
    \geq& c_{\mathcal{N}}-\frac{{\tilde{t}}^4}{4}J^\prime(\tilde{u})\tilde{u}\\
    > & c_{\mathcal{N}}.
	\end{aligned}
	\end{align*}
	Hence, $J^\prime(\tilde{u})\tilde{u}=0$, and up to a subsequence, 
	\[ \lim_{n \rightarrow \infty}\int_{\mathbb{R}^{N}}\left|\nabla u_{n}\right|^{2} d x =\int_{\mathbb{R}^{N}}\left|\nabla\tilde{u}\right|^{2} d x ,\quad \mbox{as}\quad n \rightarrow \infty\]
	and
	\[ \lim_{n \rightarrow \infty} \int_{\mathbb{R}^{N}}\left|\nabla u_{n}\right|^{2} +2V(x)u_n^2 d x=  \int_{\mathbb{R}^{N}}\left|\nabla\tilde{u}\right|^{2} +2V(x)\tilde{u}^2 d x,\quad \mbox{as}\quad n \rightarrow \infty.\]
\end{proof}

Next, we prove the minimizer of the constrained problem is a critical point, which plays
a crucial role in the asymptotically periodic case.
\begin{lemma}\label{lemma2.10}
    Assume that $({V})$ and $({f} 1)-({f} 4)$ are satisfied. If $u_{0} \in \mathcal{N}$ and $J\left(u_{0}\right)=c_\mathcal{N},$ then $u_{0}$ is a critical point of $J.$
\end{lemma}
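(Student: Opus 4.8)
The plan is to argue by contradiction via a quantitative deformation lemma rather than by Lagrange multipliers: since $(f4)$ only controls $f(x,u)u-4F(x,u)$ up to a sign-changing term, $\mathcal{N}$ need not be a $C^1$-manifold, so the usual constrained-critical-point argument is unavailable. Suppose $J'(u_0)\neq 0$. By continuity of $J'$ there are $\delta>0$ and $\lambda>0$ with $\|J'(v)\|\geq\lambda$ whenever $\|v-u_0\|\leq 3\delta$. Since $u_0\in\mathcal{N}$, inequality \eqref{2.5} gives $J(tu_0)\leq c_{\mathcal{N}}-(1-\mu)\frac{(1-t^2)^2}{4}\|u_0\|^2$ for all $t\geq 0$, so the ray $t\mapsto J(tu_0)$ attains the value $c_{\mathcal{N}}$ only at $t=1$ and is strictly below it elsewhere, with a quantitative gap. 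Using this I would fix $0<T_1<1<T_2$ and $\epsilon>0$ small enough that $J(T_iu_0)<c_{\mathcal{N}}-2\epsilon$, that $\epsilon\leq\lambda\delta/8$, and (again by \eqref{2.5}) that every $t$ with $J(tu_0)\geq c_{\mathcal{N}}-2\epsilon$ obeys $\|tu_0-u_0\|=|t-1|\,\|u_0\|\leq\delta$; since $c_{\mathcal{N}}>0$ by Lemma~\ref{lemma2.7}, shrinking $\epsilon$ further keeps the point $0$ out of the band $J^{-1}([c_{\mathcal{N}}-2\epsilon,c_{\mathcal{N}}+2\epsilon])$.

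Next I would apply the quantitative deformation lemma with $c=c_{\mathcal{N}}$, $S=B(u_0,\delta)$ and these $\epsilon,\delta$, obtaining a deformation $\eta$ that fixes every point with $J<c_{\mathcal{N}}-2\epsilon$, is energy non-increasing, satisfies $\|\eta(1,u)-u\|\leq\delta$, and maps $J^{c_{\mathcal{N}}+\epsilon}\cap S$ into $J^{c_{\mathcal{N}}-\epsilon}$. Setting $\gamma(t)=\eta(1,tu_0)$ for $t\in[T_1,T_2]$, I would show $\max_{t}J(\gamma(t))<c_{\mathcal{N}}$ by splitting the interval: on the band $\{t:J(tu_0)\geq c_{\mathcal{N}}-2\epsilon\}$ one has $tu_0\in J^{c_{\mathcal{N}}+\epsilon}\cap S$, so $J(\gamma(t))\leq c_{\mathcal{N}}-\epsilon$, while on the complement the non-increasing property yields $J(\gamma(t))\leq J(tu_0)<c_{\mathcal{N}}-2\epsilon$.

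The crucial point is that the deformed path must still cross $\mathcal{N}$. Because $J(T_iu_0)<c_{\mathcal{N}}-2\epsilon$, the endpoints are fixed, so $\gamma(T_1)=T_1u_0$ and $\gamma(T_2)=T_2u_0$. Consider the continuous function $\Psi(t)=J'(\gamma(t))\gamma(t)$. Since $t_{u_0}=1$ is the unique zero of $\gamma_1(t)=J'(tu_0)(tu_0)$ (Lemma~\ref{lemma2.4}, cf.\ \eqref{2.7}) and $\gamma_1>0$ for small $t$, $\gamma_1<0$ for large $t$, continuity and uniqueness force $\gamma_1>0$ on $(0,1)$ and $\gamma_1<0$ on $(1,\infty)$; hence $\Psi(T_1)=\gamma_1(T_1)>0$ and $\Psi(T_2)=\gamma_1(T_2)<0$. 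By the intermediate value theorem there is $t^*\in(T_1,T_2)$ with $\Psi(t^*)=0$. Taking $\delta<\|u_0\|/2$ guarantees $\gamma(t^*)\neq 0$, because $\|\eta(1,u)-u\|\leq\delta$ and $\|tu_0\|$ is bounded away from $0$ on the active band, so $\gamma(t^*)\in\mathcal{N}$ and $J(\gamma(t^*))\geq c_{\mathcal{N}}$, contradicting the energy drop. Therefore $J'(u_0)=0$.

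I expect the main obstacle to lie in this last paragraph: certifying that the deformation does not break the intersection with $\mathcal{N}$. This requires the endpoints to stay fixed with opposite signs of $\gamma_1$, the continuity of $\Psi$, and a uniform lower bound keeping $\gamma$ away from $0$, and it dictates the order in which $T_1,T_2,\epsilon,\delta$ must be calibrated. By contrast, the energy estimate of the middle paragraph is a routine application of the deformation lemma once $\mathcal{N}$ and $c_{\mathcal{N}}$ have been controlled through \eqref{2.5}.
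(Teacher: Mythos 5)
Your proposal is correct and follows essentially the same route as the paper: assume $J'(u_0)\neq 0$, invoke Willem's quantitative deformation lemma near the level $c_{\mathcal{N}}$ using the strict gap from \eqref{2.5} to push the ray $t\mapsto tu_0$ strictly below $c_{\mathcal{N}}$, and then show the deformed ray still meets $\mathcal{N}$, contradicting the definition of $c_{\mathcal{N}}$. The only cosmetic difference is that you certify the crossing via the intermediate value theorem applied to $t\mapsto J'(\eta(1,tu_0))\,\eta(1,tu_0)$ with fixed endpoints of opposite sign, whereas the paper expresses the same one-dimensional fact through Brouwer degree on the interval $(1/2,\sqrt{7}/2)$.
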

\begin{proof}
Let $u_{0} \in \mathcal{N}$, $J(u_0)=c_\mathcal{N}$ and $J^\prime(u_0)\not = 0.$  Then there exist $\delta>0$ and $\rho>0$ such that 

\begin{equation}\label{2.20.1}
\left\|u-u_{0}\right\| \leq 3 \delta \Rightarrow\left\|J^{\prime}(u)\right\| \geq \rho.
\end{equation}
By Lemma \ref{lemma2.1}, we have 
\begin{equation}\label{2.21}
\begin{aligned} J\left(t u_{0}\right) & \leq J\left(u_{0}\right)-\frac{\left(1-\mu\right)\left(1-t^{2}\right)^{2}}{4}\left\|u_{0}\right\|^{2} \\ &=c_\mathcal{N}-\frac{\left(1-\mu\right)\left(1-t^{2}\right)^{2}}{4}\left\|u_{0}\right\|^{2}, \quad \forall t \geq 0 \end{aligned}
\end{equation}
For $\varepsilon :=\min \left\{3\left(1-\mu\right)\left\|u_{0}\right\|^{2} / 64,1, \rho \delta / 8\right\}, S:=B\left(u_{0}, \delta\right),$ from \cite[Lemma 2.3]{willem} we get a deformation $\eta \in \mathcal{C}([0,1] \times H, H)$ such that\\
(i) $\eta(1, u)=u$ if $u\not \in J^{-1}([c_\mathcal{N}-2 \varepsilon,c_\mathcal{N}+2 \varepsilon]),$\\
(ii) $\eta\left(1, J^{c_\mathcal{N}+\varepsilon} \cap B\left(u_{0}, \delta\right)\right) \subset J^{c_\mathcal{N}-\varepsilon},$\\
(iii) $J(\eta(1, u)) \leq J(u), \forall u \in H,$\\
(iv) $\eta(1, u)$ is a homeomorphism of $H.$\\
By Corollary \ref{2.2} and (ii), one has
\begin{equation}\label{2.22}
J\left(\eta\left(1, t u_{0}\right)\right) \leq c_{\mathcal{N}}-\varepsilon, \quad \forall t \geq 0,|t-1|<\delta /\left\|u_{0}\right\|.
\end{equation}
Now, using \eqref{2.21} and (iii), we have that 
\begin{equation}\label{2.23}
\begin{aligned} J\left(\eta\left(1, t u_{0}\right)\right) & \leq J\left(t u_{0}\right) \\ & \leq c_\mathcal{N}-\frac{\left(1-\mu\right)\left(1-t^{2}\right)^{2}}{4}\left\|u_{0}\right\|^{2} \\ & \leq c_\mathcal{N}-\frac{\left(1-\mu\right) \delta^{2}}{4}, \quad \forall t \geq 0,|t-1| \geq \delta /\left\|u_{0}\right\| .\end{aligned}
\end{equation}
By \eqref{2.22} and \eqref{2.23}, it follows that
\begin{equation}\label{2.24.1}
\max _{t \in[1 / 2, \sqrt{7} / 2]} J\left(\eta\left(1, t u_{0}\right)\right)<c_\mathcal{N}.
\end{equation}
Let us to prove that $\eta\left(1, t u_{0}\right) \cap \mathcal{N} \neq \emptyset$ for some $t \in[1 / 2, \sqrt{7} / 2],$  which is a contradiction
with the definition of $c_\mathcal{N}.$ Set
$$
\sigma_{0}(t) := J^{\prime}\left(t u_{0}\right) t u_{0}, \quad \sigma_{1}(t) :=J^{\prime}(\eta\left(1, t u_{0}\right)) \eta\left(1, t u_{0}\right), \quad \forall t \geq 0
$$

By (iv), since $u_0\not=0$, one has $\eta\left(1, t u_{0}\right)$ for all $t> 0.$ From \eqref{2.21} and (i), it follows that $\eta\left(1, t u_{0}\right)=t u_{0}$ for $t=1 / 2$ and $t=\sqrt{7}/2$. On the other hand, Lemma \ref{lemma2.4} and degree theory implies $
\operatorname{deg}\left(\sigma_{0},(1 / 2, \sqrt{7} / 2), 0\right)=1$. Then, by the invariance of the degree for functions coinciding at the domain boundary,
$$
\operatorname{deg}\left(\sigma_{1},(1 / 2, \sqrt{7} / 2), 0\right)=\operatorname{deg}\left(\sigma_{0},(1 / 2, \sqrt{7} / 2), 0\right)=1.
$$
Thus there exists $t_{0} \in(1 / 2, \sqrt{7} / 2)$ such that $\sigma_1(t_0)=0$ which implies $\eta\left(1, t_0 u_{0}\right)\in \mathcal{N}$ and the proof is completed.
\end{proof}
\section{The Periodic Case}

\textit{Proof of Theorem \eqref{thm1.1}} Using Lemma \ref{lemma2.8}, we get a sequence $\{u_n\}\subset H$ that satisfies 
	\begin{equation}\label{3.1}
	J(u_n)\to d,\quad J'(u_n)u_n \to 0.
	\end{equation}
By \eqref{2.14} and \eqref{3.1}, for large $n\in \mathbb{N}$, we get
\[
d+1\geq J(u_n)-\frac{1}{4}J^\prime(u_n)u_n\geq \frac{1-\mu}{4}\|u_n\|^2.
\]

Then there exists $c>0$ such that $|u_n|^2_2\leq c$. If 
$$
l=\sup _{y \in \mathbb{R}^{N}} \int_{B_1(y)}\left|u_{n}\right|^{2} \rightarrow 0, n \rightarrow \infty,
$$

then, by  Lemma 1.21 \cite{willem}, one has $u_{n} \rightarrow 0$ in $L^{p}\left(\mathbb{R}^{N}\right)$ for $2<p<2_*$. By $(f1)$-$(f2),$ we get

\begin{align*}
\begin{aligned} d &=J\left(u_{n}\right)-\frac{1}{2} J^{\prime}\left(u_{n}\right) u_{n}+o(1) \\ &=-\frac{b}{4}\left|\nabla u_{n}\right|_{2}^{4}+\int_{\mathbb{R}^{3}}\left[\frac{1}{2} f\left(x, u_{n}\right) u_{n}-F\left(x, u_{n}\right)\right] d x+o_{n}(1) \\ & \leq o_{n}(1)+\varepsilon, \end{aligned}
\end{align*}
for any $\varepsilon>0$. Thus, $l>0$ and so, we may assume that there exist $\{y_n\}\in\mathbb{Z}^N$ such that 
\[
\int_{B_{1+\sqrt{N}}\left(y_{n}\right)}\left|u_{n}\right|^{2} d x>\frac{l}{2}.
\]
Let us define $v_n(x)=u_n(x+y_n)$, such that $\|v_n\|=\|u_n\|$,
\[
\int_{B_{1+\sqrt{N}}\left(0\right)}\left|v_{n}\right|^{2} d x>\frac{l}{2}
\]
and 
\[
J(v_n)\to d,\quad \|J^\prime(v_n)v_n\|(1+\|v_n\|)\to 0.
\]
Analogously, we may assume there exists $\tilde{v}\in H$ such that 
\begin{equation*}
	\left\{\begin{array}{l}{v_{n}\rightharpoonup \tilde{v} \text { in } H^{2}\left(\mathbb{R}^{N}\right)} \\ {v_{n} \rightarrow \tilde{v} \text { in } L_{loc}^{q}\left(\mathbb{R}^{N}\right)\left(2 \leq q<2_{*}\right)} \\ {v_{n}(x) \rightarrow \tilde{v}(x) \text { a.e. on } \mathbb{R}^{N} \text { . }}\end{array}\right.
	\end{equation*}
	Also, up to a subsequence,
	\[\int_{\mathbb{R}^{N}}\left|\nabla v_{n}\right|^{2} d x \rightarrow \int_{\mathbb{R}^{N}}\left|\nabla \tilde{v}\right|^{2} d x,\quad \mbox{as}\quad n \rightarrow \infty\]
	and
	\[\int_{\mathbb{R}^{N}}\left|\nabla v_{n}\right|^{2} +2V(x)v_n^2 d x \rightarrow \int_{\mathbb{R}^{N}}\left|\nabla \tilde{v}\right|^{2}+ 2V(x)\tilde{v}^2d x,\quad \mbox{as}\quad n \rightarrow \infty.\]
	We obtain
	\[
	J^\prime(\tilde{v})\psi=\lim_n J^\prime(v_n)\psi=0,\quad \forall \quad \psi \in H,
	\]
	which implies $J^\prime (\tilde{v})=0$ with $\tilde{v}\in \mathcal{N}$. Follows from \eqref{2.14}, Fatou's lemma and weak semicontinuity of norm that
		\begin{align*}
	\begin{aligned} c_{\mathcal{N}} & \geq d=\lim _{n \rightarrow \infty}\left(J\left(v_{n}\right)-\frac{1}{4} J^{\prime}\left(v_{n}\right) v_{n}\right) \\
	=& \lim _{n \rightarrow \infty}\left[\frac{1}{4}\left\|v_{n}\right\|^{2}+\int_{\mathbb{R}^{N}}\left(\frac{1}{4} f\left(x, v_{n}\right) v_{n}-F\left(x, v_{n}\right)\right) d x \right]\\
	 \geq & \frac{1}{4} \liminf _{n \rightarrow \infty}\left(\left\|v_{n}\right\|^{2}-\mu \int_{\mathbb{R}^{N}}a V(x) v_{n}^{2} d x\right) \\
	 &\quad+\liminf _{n \rightarrow \infty} \int_{\mathbb{R}^{N}}\left(\frac{1}{4} f\left(x, v_{n}\right) v_{n}-F\left(x, v_{n}\right)+\frac{\mu aV(x)}{4}v_{n}^{2}\right) {d}x\\
    \geq& \frac{1}{4}\left(\left\|\tilde{v}\right\|^{2}-\mu \int_{\mathbb{R}^{N}}a V(x) \tilde{v}^{2} d x\right)+\int_{\mathbb{R}^{N}}\left[\frac{1}{4} f\left(x, \tilde{v}\right) \tilde{v}-F\left(x, \tilde{v}\right)+\frac{\mu aV(x)}{4}\tilde{v}^{2}\right] d x\\
    =&\left(J\left(\tilde{v}\right)-\frac{1}{4} J^{\prime}\left(\tilde{v}\right)\tilde{v}\right).
	\end{aligned}
	\end{align*}
	Hence, $J(\tilde{v})= c_{\mathcal{N}}>0$ and $\tilde{v}\neq 0.$
	
	\section{The asymptotically periodic case}
	In this section, we have  $V(x)=V_{0}(x)+V_{1}(x)$ and  $f(x, u)=f_{0}(x, u)+f_{1}(x, u)$ \\
	 Define functional $J_{0}$ as follows: 
	 \begin{equation}\label{4.1}
J_0(u)=\frac{1}{2}\left[\int_{\mathbb{R}^{N}}\left(a|\nabla u|^{2}+V_0(x) u^{2}\right) d x\right]+\frac{b}{4}|\nabla u|^4_2+\frac{b}{2}(|\nabla u|^2_2)\int_{\mathbb{R}^N}V(x)u^2d x-\int_{\mathbb{R}^{N}} F_0(x, u) d x
\end{equation}
where $F_{0}(x, u) :=\int_{\mathbb{R}^{N}} f_{0}(x, s) d s$. By $(V'),$ $(f1),$ $(f2),$ $(f5)$ and $(f6)$ we have $J_{0} \in \mathcal{C}^{1}(H, \mathbb{R})$ and
\begin{equation}\label{4.2}
J_0^{\prime}(u) v=(u, v)+b\left(|\nabla u|_{2}^{2}+\int_{\mathbb{R}^{N}} V_0(x) u^{2} d x\right) \int_{\mathbb{R}^{N}} \nabla u \nabla v d x+b\left(|\nabla u|_{2}^{2}\right) \int_{\mathbb{R}^{N}} V_0(x) u v d x-\int_{\mathbb{R}^{N}} f_0(x, u) v d x
\end{equation}

\begin{lemma}
    Assume that $(V'),$ $(f1),$ $(f2),$ $(f5)$ and $(f6)$ are satisfied. Then, if $u_{n} \rightharpoonup 0$ in $H$, we have
    \begin{equation}\label{4.3}
\lim _{n \rightarrow \infty} \int_{\mathbb{R}^{N}} V_{1}(x) u_{n}^{2} d x=0, \quad \lim _{n \rightarrow \infty} \int_{\mathbb{R}^{N}} V_{1}(x) u_{n} v d x=0, \quad \forall v \in H;
\end{equation}

\begin{equation}\label{4.4}
\lim _{n \rightarrow \infty} \int_{\mathbb{R}^{N}} F_{1}\left(x, u_{n}\right) d x=0, \quad \lim _{n \rightarrow \infty} \int_{\mathbb{R}^{N}} f_{1}\left(x, u_{n}\right) v d x=0, \quad \forall v \in H.
\end{equation}
\end{lemma}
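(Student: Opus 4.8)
The plan is to reduce all four limits in \eqref{4.3}--\eqref{4.4} to a single compactness statement for functions in the class $\mathcal{B}$, and then to recover the bilinear limits and the nonlinear limits from it via Cauchy--Schwarz and Hölder, using the growth bound \eqref{1.6}. The decisive structural feature is that $V_{1}$ and $h$ lie in $\mathcal{B}$, so that the sets on which they are not small have finite measure.

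First I would isolate the following sublemma: \emph{if $b\in\mathcal{B}$, $s\in[2,2_{*})$ and $u_{n}\rightharpoonup 0$ in $H$, then $\int_{\mathbb{R}^{N}}|b|\,|u_{n}|^{s}\,dx\to 0$.} Since $u_{n}\rightharpoonup 0$ in $H$ the sequence is bounded, say $\sup_{n}\|u_{n}\|\le M$, so by the Sobolev embeddings $H\hookrightarrow L^{s}(\mathbb{R}^{N})$ and $H\hookrightarrow L^{2_{*}}(\mathbb{R}^{N})$ the norms $\|u_{n}\|_{s}$ and $\|u_{n}\|_{2_{*}}$ are uniformly bounded by some constant $C$. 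Fix $\epsilon>0$ and set $A_{\epsilon}:=\{x:|b(x)|\ge\epsilon\}$, which has finite Lebesgue measure by the definition of $\mathcal{B}$. On its complement one has the crude bound $\int_{\mathbb{R}^{N}\setminus A_{\epsilon}}|b|\,|u_{n}|^{s}\,dx\le\epsilon\,\|u_{n}\|_{s}^{s}\le\epsilon\,C$.

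The main obstacle is to prove $\int_{A_{\epsilon}}|u_{n}|^{s}\,dx\to 0$, because $A_{\epsilon}$ is only of finite measure and need not be bounded, so the compact embedding cannot be invoked directly. I would handle this by truncation: given $\eta>0$, since $|A_{\epsilon}|<\infty$ I can choose $R$ so large that $|A_{\epsilon}\setminus B_{R}|<\eta$. On $A_{\epsilon}\cap B_{R}$ the Rellich--Kondrachov compact embedding of $H^{2}(B_{R})$ into $L^{s}(B_{R})$ (valid since $s<2_{*}$) gives $\int_{A_{\epsilon}\cap B_{R}}|u_{n}|^{s}\,dx\le\int_{B_{R}}|u_{n}|^{s}\,dx\to 0$; on the tail, Hölder with exponent $2_{*}/s>1$ yields $\int_{A_{\epsilon}\setminus B_{R}}|u_{n}|^{s}\,dx\le|A_{\epsilon}\setminus B_{R}|^{1-s/2_{*}}\,\|u_{n}\|_{2_{*}}^{s}\le\eta^{1-s/2_{*}}C$. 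Letting $n\to\infty$ and then $\eta\to 0$ gives $\int_{A_{\epsilon}}|u_{n}|^{s}\,dx\to 0$, and since $b\in L^{\infty}$ also $\int_{A_{\epsilon}}|b|\,|u_{n}|^{s}\,dx\le\|b\|_{\infty}\int_{A_{\epsilon}}|u_{n}|^{s}\,dx\to 0$. Combining the two regions and then letting $\epsilon\to 0$ proves the sublemma.

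With the sublemma in hand the four limits follow quickly. Applying it to $b=V_{1}\in\mathcal{B}$ with $s=2$ gives the first limit in \eqref{4.3}; the second follows from Cauchy--Schwarz, $|\int_{\mathbb{R}^{N}}V_{1}u_{n}v\,dx|\le(\int_{\mathbb{R}^{N}}|V_{1}|u_{n}^{2}\,dx)^{1/2}(\int_{\mathbb{R}^{N}}|V_{1}|v^{2}\,dx)^{1/2}$, where the first factor tends to $0$ and the second is finite because $V_{1}\in L^{\infty}$ and $v\in L^{2}$. For \eqref{4.4} I would use \eqref{1.6}, which gives $|F_{1}(x,t)|\le h(x)(\tfrac12|t|^{2}+\tfrac1q|t|^{q})\le h(x)(|t|^{2}+|t|^{q})$ and $|f_{1}(x,t)|\le h(x)(|t|+|t|^{q-1})$ with $h\in\mathcal{B}$ and $q\in(2,2_{*})$. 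Hence $|\int_{\mathbb{R}^{N}}F_{1}(x,u_{n})\,dx|\le\int_{\mathbb{R}^{N}}h\,u_{n}^{2}\,dx+\int_{\mathbb{R}^{N}}h\,|u_{n}|^{q}\,dx$, and both terms vanish by the sublemma applied with $s=2$ and $s=q$. Finally, for $\int f_{1}(x,u_{n})v$, the term $\int_{\mathbb{R}^{N}}h|u_{n}||v|\,dx$ is controlled by Cauchy--Schwarz as above, while splitting $h=h^{(q-1)/q}h^{1/q}$ and applying Hölder with exponents $q/(q-1)$ and $q$ gives
$$
\int_{\mathbb{R}^{N}}h\,|u_{n}|^{q-1}|v|\,dx\le\Big(\int_{\mathbb{R}^{N}}h\,|u_{n}|^{q}\,dx\Big)^{(q-1)/q}\Big(\int_{\mathbb{R}^{N}}h\,|v|^{q}\,dx\Big)^{1/q},
$$
where the $u_{n}$-factors vanish by the sublemma and the $v$-factors are finite since $h\in L^{\infty}$ and $v\in L^{2}\cap L^{q}$. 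This yields $\int_{\mathbb{R}^{N}}f_{1}(x,u_{n})v\,dx\to 0$ and completes the proof.
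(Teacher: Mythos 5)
Your proposal is correct, and in fact it supplies something the paper itself omits: the paper states this lemma and then proceeds directly to the proof of Theorem \ref{thm1.2} without giving any argument for it (such statements are treated as standard in the cited literature on asymptotically periodic problems, e.g.\ \cite{bcub,bcubfrac}). So there is no proof in the paper to compare against; what matters is whether your self-contained argument holds, and it does. Your sublemma --- for $b\in\mathcal{B}$, $s\in[2,2_{*})$ and $u_{n}\rightharpoonup 0$ in $H$, one has $\int_{\mathbb{R}^{N}}|b|\,|u_{n}|^{s}\,dx\to 0$ --- is exactly the right unifying device, and each step checks out: the smallness of $|b|$ off $A_{\epsilon}$, the splitting of $A_{\epsilon}$ into a ball (where restriction preserves weak convergence and Rellich--Kondrachov applies, since $s<2_{*}$) and a tail of measure less than $\eta$ (controlled by H\"older against the uniform $L^{2_{*}}$ bound with exponent $1-s/2_{*}>0$), and the order of limits $n\to\infty$, $\eta\to 0$, $\epsilon\to 0$ is taken correctly. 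The reductions of the four limits to the sublemma via Cauchy--Schwarz, the primitive bound $|F_{1}(x,t)|\le h(x)\bigl(|t|^{2}/2+|t|^{q}/q\bigr)$ coming from \eqref{1.6}, and the weighted H\"older split $h=h^{(q-1)/q}h^{1/q}$ are all sound; the finiteness of the $v$-factors uses only $V_{1},h\in L^{\infty}$ and the embedding $H\hookrightarrow L^{2}\cap L^{q}$, which is available since $q\in(2,2_{*})$. One cosmetic remark: the hypothesis \eqref{1.6} implicitly forces $h\ge 0$ wherever it is used, and your argument tacitly assumes this (or one can replace $h$ by $|h|$, which is also in $\mathcal{B}$); stating that explicitly would make the write-up airtight.
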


\noindent
\textit{Proof of Theorem \ref{thm1.2}}. Lemma \ref{lemma2.8} implies the existence of a sequence $\{u_n\}$ in $H$ such that 
\begin{equation}\label{4.5}
J\left(u_{n}\right) \rightarrow d, \qquad\left\|J^{\prime}\left(u_{n}\right)\right\|\left(1+\left\|u_{n}\right\|\right) \rightarrow 0.
\end{equation}
\noindent
By Lemma \ref{lemma1.10}, one has $\{u_n\}$ bounded and then, up to a subsequence, $u_n\rightharpoonup u$ for some $u\in H.$ Hence, 
	\begin{equation*}
	\left\{\begin{array}{l}{u_{n} \rightharpoonup u \text { in } H^{2}\left(\mathbb{R}^{N}\right)} \\ {u_{n} \rightarrow u \text { in } L_{loc}^{q}\left(\mathbb{R}^{N}\right)\left(2 \leq q<2_{*}\right)} \\ {u_{n}(x) \rightarrow u(x) \text { a.e. on } \mathbb{R}^{N} \text { . }}\end{array}\right.
	\end{equation*}
	Similarly to the proof of Theorem \ref{thm1.1}, if $u=0$, then
		\begin{equation*}
	\left\{\begin{array}{l}{u_{n} \rightharpoonup 0 \text { in } H^{2}\left(\mathbb{R}^{N}\right)} \\ {u_{n} \rightarrow 0 \text { in } L_{loc}^{q}\left(\mathbb{R}^{N}\right)\left(2 \leq q<2_{*}\right)} \\ {u_{n}(x) \rightarrow 0 \text { a.e. on } \mathbb{R}^{N} \text { . }}\end{array}\right.
	\end{equation*}
 Observe that
\begin{equation}\label{4.6}
    \|u\|^2=\int_{\mathbb{R}^N}(|\Delta u|^2+a(|\nabla u|^2+V_0(x)u^{2}) )d x+\int_{\mathbb{R}^N}V_1(x)u^2d x,\quad \forall u\in H;
\end{equation} 
	
\begin{equation}\label{4.7}
    J_{0}(u)=J(u)-\frac{a}{2} \int_{\mathbb{R}^{N}} V_{1}(x) u^{2} d x+\int_{\mathbb{R}^{N}} F_{1}(x, u) d x, \quad \forall u \in H
\end{equation}	
	and
	\begin{equation}\label{4.8}
	    J_0^{\prime}(u) v= J^{\prime}(u)v-a\int_{\mathbb{R}^{N}} V_{1}(x) u v d x+\int_{\mathbb{R}^{N}} f_{1}(x, u) v d x, \quad \forall u, v \in H.
	\end{equation}
	By \eqref{2.16}, \eqref{4.3}-\eqref{4.5}, \eqref{4.7}-\eqref{4.8}, one has
	\begin{equation}\label{4.9}
J_{0}\left(u_{n}\right) \rightarrow d, \qquad\left\|J_{0}^{\prime}\left(u_{n}\right)\right\|\left(1+\left\|u_{n}\right\|\right) \rightarrow 0.
\end{equation}
	As in the proof of Theorem \ref{thm1.1}, there exists $y_{n} \in \mathbb{Z}^{N},$ up to a subsequence, such that
	\begin{equation}\label{4.10}
\int_{B_{1+\sqrt{N}}\left(y_{n}\right)}\left|u_{n}\right|^{2} d x>\frac{l}{2}
    \end{equation}
Let us define $v_n(x)=u_n(x+y_n)$, such that $\|v_n\|=\|u_n\|$,
\[
\int_{B_{1+\sqrt{N}}\left(0\right)}\left|v_{n}\right|^{2} d x>\frac{l}{2}
\]
and 
\begin{equation}\label{4.111}
J_0(v_n)\to d\in (0,	c_{\mathcal{N}}],\quad \|J_0^\prime(v_n)\|(1+\|v_n\|)\to 0.
\end{equation}
Up to a subsequence, we have
$$
\left\{\begin{array}{l}v_n \rightharpoonup v_0\text { in } H^{2}(\mathbb{R}^{N}) \\
{v_{n} \rightarrow v_{0} \text { in } L_{l o c}^{q}\left(\mathbb{R}^{N}\right)\left(2 \leq q<2^{*}\right)} \\
{v_{n}(x) \rightarrow v_{0}(x) \text { a.e. on } \mathbb{R}^{N}}\end{array}\right.
$$
From \eqref{4.10}, we conclude that $v_0\not = 0.$ In view of \eqref{2.3}, Corollary \ref{2.2}, Lemma \ref{lemma2.5}, \eqref{4.7} and \eqref{4.8}, we obtain
\begin{equation}\label{4.13}
J_{0}(u)=\max _{t \geq 0} J_{0}(t u), \quad \forall u \in \mathcal{N}_{0}, \quad \inf _{u \in \mathcal{N}_0} J_{0}(u)=c_{\mathcal{N}_0}=\inf _{u \in H\setminus \{0\}} \max _{t \geq 0} J_{0}(t u)>0,
\end{equation}
where
$$
\mathcal{N}_{0} :=\left\{u \in H :u \neq 0, J_{0}^{\prime}(u) u= 0\right\}.
$$
From Theorem \ref{thm1.1} there exists $v_0\in \mathcal{N}_{0}$ such that $J_0(u_0)=c_{\mathcal{N}_0}>0.$ By $(V'),$ $(f5),$ \eqref{4.7} and \eqref{4.13}, we obtain
\begin{equation}\label{4.14}
c_\mathcal{N}=\inf _{v \in \mathcal{N}} \max _{t \geq 0} J(t v) \leq \max _{t \geq 0} J\left(t v_{0}\right) \leq \max _{t \geq 0} J_{0}\left(t v_{0}\right) \leq J_{0}\left(v_{0}\right)=c_{\mathcal{N}_{0}}.
\end{equation}

By $(f5)$ and \eqref{4.8}, we have 
$$
J^{\prime}(v_0) v_0 \leq J_{0}^{\prime}(v_0)v_0= 0.
$$
From \eqref{2.4}, \eqref{2.14}, \eqref{4.1}-\eqref{4.2}, \eqref{4.111}, the weakly lower semi-continuity of the norm and
Fatou's lemma, we have
	\begin{align*}
	\begin{aligned} c_{\mathcal{N}} & \geq d=\lim _{n \rightarrow \infty}J_0\left(v_{n}\right)-\frac{1}{4} J_0^{\prime}\left(v_{n}\right) v_{n} \\
	=& \lim _{n \rightarrow \infty}\left[\frac{1}{4}\left\|v_{n}\right\|^{2}+\int_{\mathbb{R}^{N}}\left(\frac{1}{4} f_0\left(x, v_{n}\right) v_{n}-F_0\left(x, v_{n}\right)\right) d x \right]\\
	 \geq & \frac{1}{4} \liminf _{n \rightarrow \infty}\left(\left\|v_{n}\right\|^{2}-\mu \int_{\mathbb{R}^{N}}a V_0(x) v_{n}^{2} d x\right) \\
	 &\quad+\liminf _{n \rightarrow \infty} \int_{\mathbb{R}^{N}}\left(\frac{1}{4} f_0\left(x, v_{n}\right) v_{n}-F_0\left(x, v_{n}\right)+\frac{\mu aV_0(x)}{4}v_{n}^{2}\right) {d}x\\
    \geq& \frac{1}{4}\left(\left\|v_{0}\right\|^{2}-\mu \int_{\mathbb{R}^{N}}a V_0(x) v_{0}^{2} d x\right)+\int_{\mathbb{R}^{N}}\left[\frac{1}{4} f_0\left(x, v_{0}\right) v_{0}-F_0\left(x, v_{0}\right)+\frac{\mu aV_0(x)}{4}v_{0}^{2}\right] d x\\
    =&\left(J_0\left(v_{0}\right)-\frac{1}{4} J_0^{\prime}\left(v_{0}\right)v_{0}\right)\\
    =&J_0\left(v_{0}\right)
	\end{aligned}
	\end{align*}
and so, $c_\mathcal{N}\geq J_0\left(v_{0}\right).$
In view of the Lemma \ref{lemma2.4}, there exists $t_0>0$ such that $t_0v_0\in \mathcal{N}.$ Then $J\left(t_{0}v_0\right) \geq c_{\mathcal{N}}.$ In fact, $J\left(t_{0}v_0\right) = c_{\mathcal{N}}.$ Arguing by contradiction, suppose that $J\left(t_{0}v_0\right) > c_{\mathcal{N}},$ and so, by $(V'),$ $(f5),$ \eqref{2.6}, \eqref{4.7} and \eqref{4.8},
\begin{align*}
	 c_{\mathcal{N}}&\geq J_0(v_0)\geq J_0(t_0v_0)\\
	 &=J(t_0v_0)-\frac{a}{2} \int_{\mathbb{R}^{N}} V_{1}(x) (t_0v_0)^{2} d x+\int_{\mathbb{R}^{N}} F_{1}(x, t_0v_0) d x\\
	 &\geq J(t_0v_0)> c_{\mathcal{N}}.
\end{align*}
This shows $J\left(t_{0}v_0\right) = c_{\mathcal{N}}.$

Take $u_0=t_0v_0$ and so, from Lemma \ref{lemma2.10} we have $J'(u_0)=0$. Thus $u_0$ is a solution of \eqref{p} when $V$ and $f$ are asymptotically periodic. Finally, if $u\not =0$ we can argue as in the final part of Theorem \ref{thm1.1} to obtain $J(u)=c_{\mathcal{N}}>0$ and $u\in H$ is a nontrivial solution for \eqref{p}. 

\section{Conflicts of Interest}
	The author declare no conflicts of interest. 
	\bigskip

\bigskip
\bigskip


\begin{thebibliography}{99}

\bibitem{b7}{Alves CO, Corrêa FJSA, Ma TF.,} \textit{Positive solutions for a quasilinear elliptic equation of Kirchhoff type.} Comput
Math Appl. 2005;49:85-93.

    \bibitem{jm} {Alves, C.O., do \'O, J.M., Miyagaki, O.H.,} \textit{Nontrivial solutions for a class of semilinear biharmonic problems involving
critical exponent.} Nonlinear Anal. {\bf 46}, 121-133 (2001).

	\bibitem{BL}{Berestycki, H., Lions, P.L.,} \textit{Nonlinear scalar field equations. I. Existence of a ground state,} Arch. Ration. Mech.
	Anal. {\bf 82} (1983), 313-- 346.
	
		
	\bibitem{bitao} {Cheng, B., Li, G., and Tang, X.,} \textit{Nehari-type ground state solutions for Kirchhoff type problems in $\mathbb{R}^N$,} Appl. Anal. (published online, 2018). 
	
	
	\bibitem{b8}{ Cheng BT, Wu X.,} \textit{Existence results of positive solutions of Kirchhoff type problems.} Nonlinear Anal.
2009;71:4883-4892.

\bibitem{b9} {Chen CY, Kuo YC, Wu TF.,} \textit{The Nehari manifold for a Kirchhoff type problem involving sign-changing weight
functions.} J Differ Equ. 2011;250:1876-1908.

\bibitem{bcub} {Chen, S. T. and Tang, X. H.,} \textit{Ground state solutions for asymptotically periodic Kirchhoff-type equations with
asymptotically cubic or super-cubic nonlinearities,} Mediterr. J. Math. 14, 209 (2017)

   \bibitem{decay}
{Deng, Y., Li, Y.}\textit{Exponential decay of the solutions for nonlinear biharmonic equations}, Commun. Contemp. Math. 9 (5) (2007) 753–768.

   \bibitem{regularity}
{Deng, Y., Li, Y.}\textit{Regularity of the solutions for nonlinear biharmonic equations in $\mathbb{R}^{N}$}, Acta Math Sci, 2009, 29B: 1469–1480

\bibitem{b10}{Figueiredo GM.,}\textit{Existence of a positive solution for a Kirchhoff problem type with critical growth via truncation
argument.} J Math Anal Appl. 2013;401:706–713.

\bibitem{b19} {Jin, J., Wu, X.,} \textit{Infinitely many radial solutions for Kirchhoff-type problems in $\mathbb{R}^N$.} J Math Anal Appl.2010;369:564-
574.

\bibitem{k}{Kirchhoff, G.,} \textit{Mechanik}, Teubner, Leipzig, 1883.

   \bibitem{lmc1}{Lazer, A. and McKenna, P.}, \textit{Large-Amplitude Periodic Oscillations in Suspension Bridges: Some New Connections with Nonlinear Analysis.}  SIAM Rev. {\bf 32}, 537-578 (1990).
   
   \bibitem{b21}{ Li, YH., Li, FY., Shi, JP.,} \textit{Existence of a positive solution to Kirchhoff type problems without compactness conditions.} J Differ Equ. 2012;253:2285-2294.
     \bibitem{lmc2} {McKenna, P.J., Walter, W.}, \textit{ Travelling waves in a suspension bridge.} SIAM J. Appl. Math. {\bf 50}, 703-715 (1990).
\bibitem{plate}{Chen, Y., McKenna, P.},\textit{Traveling waves in a nonlinear suspension beam: theoretical results and numerical observations.} J. Differ. Equ. {\bf 137}, 325–355 (1997)

   \bibitem{Noussair}{Noussair, E., Swanson, C., Jianfu, Y.,}\textit{Critical semi-linear biharmonic equations in $\mathbb{R}^{N}$} , Proc. Roy. Soc.
Edinb. 121A (1992) 139–148.

\bibitem{bcubfrac}Peng, J.W., Tang, X.H., Chen, S.T., \textit{Nehari-type ground state solutions for asymptotically periodic fractional Kirchhoff-type problems in   $\mathbb{R}^{N}$.} Bound. Value Probl. 2018, Article ID 3 (2018)


   	\bibitem{rabinowitz}{Rabinowitz, P.,} \textit{Minimax Methods in Critical Point Theory with Applications to Differential Equations.} CBMS Regional
	Conference Series in Mathematics, vol. 65. Am. Math. Soc., Providence (1986)
	
	
	\bibitem{mramos}{Ramos, M.,} \textit{Uniform estimates for the biharmonic operator in $\mathbb{R}^N$ and applications}, Commun. Appl. Analysis, Vol.8, No. 4 (2009), 435-457.

	\bibitem{willem}{Willem, M.,} \textit{Minimax theorems}, Birkh\"auser, Boston, 1996.
	
	 \bibitem{20imb}{Wu, X.,} \textit{Existence of nontrivial solutions and high energy solutions for Schr\"{o}dinger-Kirchhoff-type equations in
$\mathbb{R}^3$.} Nonlinear Anal Real World Appl. 2011;12:1278–1287.

\end{thebibliography}
\end{document}